\documentclass[11pt]{amsart}
\usepackage{amssymb,hyperref}
\usepackage{graphicx}
\usepackage[matrix,arrow]{xy}
\newcommand{\PP}{\mathbb P}

\newcommand{\CC}{{\mathbb C}}
\newcommand{\NN}{{\mathbb N}}
\newcommand{\ZZ}{{\mathbb Z}}
\newcommand{\QQ}{{\mathbb Q}}
\newcommand{\RR}{{\mathbb R}}



\newcommand{\Pic}{\mathrm{Pic}}

\newcommand{\tD}{2h}
\newcommand{\jD}{h}
\newcommand{\polD}{H}

\DeclareMathOperator{\rank}{rank}


\numberwithin{equation}{section}
\begin{document}

\title{24 rational curves on K3 surfaces}

\author{S\l awomir Rams}
\address{Institute of Mathematics, Jagiellonian University, 
ul. {\L}ojasiewicza 6,  30-348 Krak\'ow, Poland}
\email{slawomir.rams@uj.edu.pl}

\author{Matthias Sch\"utt}
\address{Institut f\"ur Algebraische Geometrie, Leibniz Universit\"at
  Hannover, Welfengarten 1, 30167 Hannover, Germany}

    \address{Riemann Center for Geometry and Physics, Leibniz Universit\"at
  Hannover, Appelstrasse 2, 30167 Hannover, Germany}

\email{schuett@math.uni-hannover.de}

\date{March 4, 2022}
\thanks{Research partially supported by the National Science Centre, Poland, Opus  grant 
no.\ 2017/25/B/ST1/00853
(S.\ Rams)}
\subjclass[2010]
{Primary: {14J28};  Secondary {14J27, 14C20}}
\keywords{K3 surface,  rational curve, polarization, elliptic fibration, hyperbolic lattice,
parabolic lattice}

\begin{abstract}
Given $d\in\NN$, we prove that all 
smooth K3 surfaces (over any field of characteristic $p \neq 2,3$) of degree greater than $84d^2$ 
contain at most 24 rational curves of degree at most $d$.
In the exceptional characteristics, the same bounds hold for non-unirational K3 surfaces,
and we develop analogous results in the unirational case.
For $d \geq 3$, we also construct  K3 surfaces
of any degree greater than $4d(d+1)$
with 24 rational curves of degree exactly $d$,
thus attaining the above bounds.
\end{abstract}

\maketitle

\newcommand{\XXd}{X_{d}}
\newcommand{\XXf}{X_{4}}
\newcommand{\XXp}{X_{5}}
\newcommand{\mF}{\mathcal F}
\newcommand{\MW}{\mathop{\mathrm{MW}}}
\newcommand{\mL}{\mathcal L}
\newcommand{\mR}{\mathcal R}
\newcommand{\Ruledeight}{S_{11}}
\newcommand{\Ruledfour}{S_{4}}
\newcommand{\DivisorRest}{\mathfrak Rest}
\newcommand{\Pl}{\Pi}
\newcommand{\reg}{\operatorname{reg}}

\newcommand{\IK}{{{\rm I}}}
\newcommand{\II}{{\mathop{\rm II}}}
\newcommand{\III}{{\mathop{\rm III}}}
\newcommand{\IV}{{\mathop{\rm IV}}}
\newcommand{\transpose}{T}  

\theoremstyle{remark}
\newtheorem{obs}{Observation}[section]
\newtheorem{rem}[obs]{Remark}
\newtheorem{example}[obs]{Example}
\newtheorem{ex}[obs]{Example}
\newtheorem{conv}[obs]{Convention}
\theoremstyle{definition}
\newtheorem{Definition}[obs]{Definition}
\theoremstyle{plain}
\newtheorem{prop}[obs]{Proposition}
\newtheorem{theo}[obs]{Theorem}
\newtheorem{Theorem}[obs]{Theorem}
\newtheorem{lemm}[obs]{Lemma}
\newtheorem{crit}[obs]{Criterion}
\newtheorem{claim}[obs]{Claim}
\newtheorem{Fact}[obs]{Fact}
\newtheorem{cor}[obs]{Corollary}
\newtheorem{assumption}[obs]{Assumption}

\newcommand{\ux}{\underline{x}}
\newcommand{\ud}{\underline{d}}
\newcommand{\ue}{\underline{e}}
\newcommand{\mmS}{{\mathcal S}}
\newcommand{\mmP}{{\mathcal P}}
\newcommand{\nlines}{\mbox{\texttt l}(\XXp)}
\newcommand{\ii}{\operatorname{i}}

\newcommand{\nonlinflec}{{\mathcal D}}
\newcommand{\linflec}{{\mathcal L}}
\newcommand{\flec}{{\mathcal F}}


\section{Introduction}
\label{intro}

The study of rational curves on projective K3 surfaces has a long history,
starting with the result of Bogomolov and Mumford
that every complex projective K3 surface contains a (possibly singular) rational curve \cite{MM}.
The conjecture that every K3 surface over an algebraically closed field
 contains infinitely many rational curves,
was recently proven  in characteristic zero   in \cite{CGL}, \cite{CGL2},
building on previous work in 
%
%
 \cite{BT},
 \cite{BHT}, \cite{LL}.

%
%

The problem of rational curves assumes a different flavour when we
consider 
polarized K3 surfaces of a fixed degree $\tD$ (i.e. pairs $(X,H)$, such that $H \in \mbox{Pic}(X)$ is very ample with  $H^2=\tD$) 
and take the degrees of the rational curves
relative to the  polarization $H$ into account.
Denote 
\[
r_d := r_d(X) := \#\{\text{rational curves } C\subset X \text{ with } \deg(C)= d = C.H\},
\]
For surfaces of small degree, the behaviour of  $r_d$, especially its maximum, seems to be hard to predict in general,
although the problem has a long history (cf.\ \cite{degt}, \cite{DIS}, \cite{RS}, \cite{Segre}).
In contrast, for complex K3 surfaces of high degree $\tD$ (i.e.\ $h>2d^2$),
Miyaoka  \cite{Miyaoka} applied the orbibundle Miyaoka--Yau--Sakai inequality from \cite{Miyaoka-orbi}
to obtain the following  bound: 
\begin{eqnarray}
\label{eq:Miyaoka}
\frac{1}{d} r_1+ \frac{2}{d} r_2+\hdots+ r_d \leq\frac{24 \jD}{	\jD-2d^2}.
\end{eqnarray}
In particular, this implies that for $\jD>50d^2$,  
one has 
\[
r_i\leq 24 \;\;\;\; \forall \,i\leq d,
\]
but it remained open to what extent this bound is sharp 
and which configurations of rational curves attain the maximal values \cite[Rem.~(2)]{Miyaoka}.
Here we remove the weights,
i.e. we consider the numbers 
\[
S_d := r_1+\hdots+r_d = \#\{\text{rational curves } C\subset X \text{ with } \deg(C)\leq d\},
\]
and use lattice theory to obtain characteristic-free bounds 
and characterize the K3 surfaces attaining them.
In particular, we show that the bounds are sharp 
(already for smaller $h$).


\begin{Theorem}
\label{thm}
Let $d\in\NN$.
\begin{enumerate}
\item[(i)]
For all $\jD>42d^2$ and for all 
 K3 surfaces $X$ of degree $\tD$ over a
 field $k$ of characteristic $p\neq  2,3$,
one has
\[
S_d\leq 24.
\]
\item[(ii)]
If $\jD\gg 0$ and $S_d>21$, 
then the  rational curves of degree at most $d$ are fibre components
of a genus one fibration.
\item[(iii)]
For $d\geq 3$ and for all $\jD\geq d(2d+1)-1$,
there are K3 surfaces of degree $2h$ with $r_d=24$
over fields of characteristic $\neq 2$.
\end{enumerate}
\end{Theorem}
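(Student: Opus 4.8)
The plan is to realize these K3 surfaces as genus one fibrations carrying the maximal number of singular fibres, together with a polarization of the prescribed degree. Fix $d\geq 3$ and $h\geq d(2d+1)-1$. I would produce a K3 surface $X$ with a genus one fibration $\pi\colon X\to\PP^1$ of fibre class $F$ such that $\pi$ has exactly $24$ irreducible (nodal) singular fibres and $\Pic(X)$ contains a very ample class $H$ with $H^2=2h$ and $H\cdot F=d$. The $24$ singular fibres are then irreducible rational curves $C$ with $C\cdot H=F\cdot H=d$, so $r_d(X)\geq 24$; the point of the construction is to guarantee that they are the only rational curves of degree $d$.

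Over $\CC$ I would obtain $(X,\pi,H)$ from the surjectivity of the period map: it suffices to name an even lattice $N$ of signature $(1,\rho-1)$ admitting a primitive embedding into the K3 lattice, a primitive nef class $F\in N$ with $F^2=0$, and an ample class $H\in N$ with $H^2=2h$ and $H\cdot F=d$, such that the general $X$ with $\Pic(X)\cong N$ has no reducible fibre in $|F|$. The rank two lattice $N=\ZZ F\oplus\ZZ B$ with $F^2=0$, $F\cdot B=d$, $B^2=2b$, where $0\leq b<d$ and $b\equiv h\pmod{d}$, together with $H=B+nF$ and $n=(h-b)/d$, does the job: the only classes of $N$ orthogonal to $F$ are the multiples of $F$, so no $(-2)$-class meets $F$ negatively, $F$ is nef and primitive, no fibre of $|F|$ is reducible, and the Saint-Donat criterion shows $H$ is very ample. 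For the general such $X$ the identity $e(X)=24$ is accounted for by $24$ distinct fibres of type $\IK_1$, i.e.\ nodal rational curves of degree $H\cdot F=d$.

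For the bound $r_d(X)\leq 24$ I would invoke the Hodge index theorem. If $C\subset X$ is an irreducible rational curve with $C\cdot H=d$, then $C^2\geq -2$ while $2h\,C^2=H^2\,C^2\leq (C\cdot H)^2=d^2$; since $2h>d^2$ and $C^2$ is even this leaves $C^2\in\{-2,0\}$. If $C^2=-2$ one excludes $C$ by checking on $N$ that no effective $(-2)$-class has degree $d$. If $C^2=0$ then $[C]$ is an isotropic class of degree $d$; as $-\det N$ is a perfect square, $N$ has exactly two isotropic lines, and for $h$ as large as assumed only $\ZZ F$ contains a class of degree $\leq d$, whence $[C]=F$ and $C$ is one of the $24$ singular fibres. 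Therefore $r_d(X)=24$.

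It remains to descend to an arbitrary field of characteristic $p\neq 2$, and this is where the real work lies. Rather than reducing from $\CC$ — which would force one to control jumps of $\Pic$ — I would write down explicit models: a Weierstrass fibration over $\PP^1$ with prescribed extra sections so as to realize the correct residue of $h$ modulo $d$, a degree $d$ multisection $B$ built from these data, and the corresponding $H$, with discriminant having $24$ distinct zeros so that all $24$ singular fibres are of type $\IK_1$. The Hodge-index argument above uses only the classes $F$ and $B$, so it survives once these are defined and $H$ is checked to be very ample directly on the model. The hard part is uniformity: one must exhibit such a model for every value $h\geq d(2d+1)-1$, not merely for one residue class modulo $d$, with $H$ genuinely very ample and no stray rational curve of degree $d$, simultaneously for all $p\neq 2$; it is this bookkeeping — keeping enough room for very ampleness and for the Hodge-index estimate throughout the range — that is responsible for the quadratic threshold $d(2d+1)-1$ and is the principal obstacle to a short proof.
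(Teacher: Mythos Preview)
Your approach is essentially the paper's: a rank-two Picard lattice with a primitive isotropic class $F$ (so that $|F|$ is a genus one pencil with $24$ nodal fibres on the general member) together with a second generator $B$ of degree $d$, and the polarization $H=nF+B$. Your parametrization $0\le b<d$ is just a shift of the paper's $c\in\{-2,0,\ldots,2d-4\}$; the two are related by the isometry the paper invokes. Your extra Hodge-index check that no other rational curve has degree $d$ is correct and is implicit in the paper's use of the \emph{generic} member of the family.

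The substantive difference is in positive characteristic, and here your plan diverges from what the paper does. You propose to write down Weierstrass models with extra sections realizing each residue of $h$ modulo $d$ and then to verify directly on that model both very ampleness and the absence of stray degree-$d$ rational curves. This is workable in principle, but the bookkeeping you flag as ``the principal obstacle'' is exactly what the paper avoids. The paper's device is to start from an explicit elliptic K3 surface $Y_r$ with a few $\IK_2$ fibres, use the \emph{fibre components} $\Theta_i$ (not sections) together with $F$ and $O$ to manufacture a very ample $H$ of the correct square (the squares $n_i^2$ in $c_0=2(1+n_2^2+\cdots+n_r^2)$ adjust the residue), and then invoke Deligne's deformation result to move $(Y_r,H,F)$ inside an $18$-dimensional family whose generic member has Picard rank $2$. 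Very ampleness is checked once on $Y_r$ and survives deformation; the ``no stray rational curve'' issue disappears because on the generic deformation $\Pic$ is exactly the rank-two lattice, so your Hodge-index argument applies automatically. Thus the missing idea in your proposal is precisely this two-step construction (explicit model with $\IK_2$ components, then Deligne deformation), which replaces the uniform explicit-model verification you anticipated.
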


%

In characteristic $2, 3$, due to the presence of quasi-elliptic fibrations
we have slightly weaker bounds
involving the following restricted rational curve count:
\[
S_d' = S_d'(X) :=
\#\left\{\begin{matrix}
\text{rational curves } C\subset X \text{ with } \deg(C)\leq d\\
\text{such that $p_a(C)\neq 1$ if $C$ is cuspidal}
\end{matrix}
\right\}.
\]

\begin{Theorem}
\label{thm2}
Let $d\in\NN$.
For all 
K3 surfaces $X$ of degree $\tD$ over a
 field $k$ of characteristic $p= 2$,
one has:
\begin{enumerate}
\item[(i)]
if $X$ is not unirational and $\jD>42d^2$, then 
$
S_d \leq
24.
$
\item[(ii)]
in general, if $\jD>46.25d^2$, then
$
S_d' \leq
40.
$
\end{enumerate}
\end{Theorem}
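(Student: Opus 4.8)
The plan is to run the same lattice-theoretic machinery that yields Theorem~\ref{thm} but to track carefully how the exceptional features of characteristic $2$ enter. The starting point is the observation that a rational curve $C\subset X$ with $C.H=d$ satisfies $C^2\in\{-2,0\}$ by adjunction (the genus formula gives $C^2=2p_a(C)-2$, and $p_a(C)\in\{0,1\}$ for an irreducible curve of arithmetic genus $\le 1$; the case $C^2=0$ forces $C$ to be a cuspidal rational curve of arithmetic genus $1$, which in characteristic $2$ can occur as a fibre of a quasi-elliptic fibration). So I would partition the rational curves of degree $\le d$ into the $(-2)$-curves and the ``quasi-elliptic cusps''. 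For part~(i), the hypothesis that $X$ is \emph{not} unirational rules out quasi-elliptic fibrations with rational base (by Rudakov--Shafarevich, a K3 carrying a quasi-elliptic fibration in characteristic $2$ is unirational), hence every rational curve of degree $\le d$ is a genuine $(-2)$-curve, and the characteristic-free argument behind Theorem~\ref{thm}(i) applies verbatim: one builds a sublattice of $\mathrm{NS}(X)$ spanned by $H$ and the classes of these curves, uses $\jD>42d^2$ to control the Gram determinant / signature, and concludes $S_d\le 24$ exactly as there. The only point to check is that no step of that proof secretly used $p\neq 2,3$ beyond excluding the cuspidal-fibre phenomenon; I expect it only used $p\neq 2,3$ to guarantee the absence of quasi-elliptic fibrations, which is now supplied by non-unirationality.

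For part~(ii) I would drop the unirationality assumption but restrict attention to the count $S_d'$, which by definition excludes precisely the cuspidal curves of arithmetic genus $1$ — i.e.\ exactly the curves that can appear as quasi-elliptic fibres and which have self-intersection $0$ rather than $-2$. Thus every curve counted by $S_d'$ is again a $(-2)$-curve, and one sets up the lattice $L\subset\mathrm{NS}(X)$ generated by $H$ together with these $(-2)$-classes. The reason the bound relaxes from $24$ to $40$ and the degree threshold worsens from $42d^2$ to $46.25d^2$ is that in characteristic $2$ one can no longer invoke the full strength of the elliptic-fibration analysis: a $(-2)$-curve of degree $\le d$ may be a component of a fibre of a quasi-elliptic fibration, and the fibre-component bound for quasi-elliptic fibrations (where the generic fibre is a cuspidal cubic and the reducible fibres are of restricted additive types $\mathrm{III},\mathrm{I}_0^*,\mathrm{III}^*$ in the Kodaira--N\'eron sense, or of the types classified by Rudakov--Shafarevich) allows more components per fibration than the $24$ one gets from $\chi(X)=24$ in the elliptic case. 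Quantitatively, I would expect the worst case to be a quasi-elliptic fibration contributing its maximal number of $(-2)$-fibre-components, and the constants $40$ and $46.25d^2$ should fall out of optimizing that contribution against the trivial lattice and Mordell--Weil rank bound $\rho\le 22$ in characteristic $2$ (the Artin--Tate / supersingular constraint $\rho\le 22$, versus $\rho\le 20$ in characteristic $0$, is itself part of why the bound is weaker).

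The key steps, in order: (1) adjunction to classify self-intersections of the relevant curves and to isolate the cuspidal-genus-$1$ case; (2) invoke Rudakov--Shafarevich to tie quasi-elliptic fibrations to unirationality, giving part~(i) immediately from the proof of Theorem~\ref{thm}(i); (3) for part~(ii), re-run the lattice argument with $S_d'$ in place of $S_d$, carrying along the possibility of quasi-elliptic fibrations and replacing the elliptic fibre-component bound by the quasi-elliptic one; (4) optimize the resulting inequalities to extract the constants $40$ and $46.25d^2$. The main obstacle is step~(3)--(4): one must redo, in the quasi-elliptic setting, the part of the argument for Theorem~\ref{thm} that bounds how many low-degree $(-2)$-curves can sit inside the fibres and sections of a genus-one fibration, now using the Rudakov--Shafarevich classification of quasi-elliptic fibres on K3 surfaces; getting the sharp constant $46.25d^2=\tfrac{185}{4}d^2$ (rather than something cruder) will require care in balancing the fibre contribution against the degree bound $H.F$ for a fibre class $F$, and this is where I expect the bulk of the technical work to lie.
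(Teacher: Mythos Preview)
Your outline for part~(i) is essentially correct and matches the paper: non-unirationality excludes quasi-elliptic fibrations, so the parabolic case gives at most $24$ fibre components via the Euler--Poincar\'e formula, and the hyperbolic case goes through as in Theorem~\ref{thm}(i).

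For part~(ii), however, you have misidentified where the constant $46.25d^2$ comes from, and this is a genuine gap. You are right that the bound $S_d'\le 40$ arises in the \emph{parabolic} case from a quasi-elliptic fibration: the formula $24=e(X)=4+\sum_t(e(F_t)-2)$ bounds the number of reducible fibres by $20$, and since each reducible (additive) fibre contributes $m_t$ smooth rational components with $e(F_t)=m_t+1$, one gets $\#\Gamma'\le 20+20=40$. This part is independent of $h$. But the threshold $46.25d^2$ has nothing to do with optimizing fibre contributions or with the bound $\rho\le 22$. It comes entirely from the \emph{hyperbolic} case, and from a mechanism you do not mention: the intrinsic polarization $H_\Gamma$ and the inequality $2h\le H_\Gamma^2$ (Proposition~\ref{prop:h^2}).

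Concretely, the paper's hyperbolic argument (Lemmas~\ref{lem:red1}, \ref{lem:red2} and \S\ref{ss:pf}) reduces any configuration with $\#\Gamma>24$ to a short list of exceptional extremal fibrations. In characteristic $2$ three such configurations survive (items \eqref{item2}, \eqref{item3}, (4) in Remark~\ref{rem:quasi-ell} and \S\ref{ss:adjust}); each contains a subgraph $\Gamma_0$ consisting of a $\IV^*$ divisor extended by three disjoint $A_2$ configurations (equivalently, a single vertex with three $A_4$ arms). One then computes the inverse of the $13\times 13$ Gram matrix of $\Gamma_0$ and finds that the relevant sum of entries is $92.5$, whence $H_0^2\le 92.5d^2$. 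The inequality $2h\le H_0^2$ then forces $h\le 46.25d^2$, so for $h>46.25d^2$ these exceptional hyperbolic configurations are impossible and the parabolic bound $40$ governs $S_d'$. Your proposal to extract $46.25d^2$ by ``balancing the fibre contribution against the degree bound $H.F$'' would not lead to this number; you need the intrinsic-polarization machinery and the explicit identification of the exceptional subgraph.
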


For infinitely many $d$, there are K3 surfaces of degree $2h$
with $r_d=24$ resp.\ $40$ for infinitely many integers $\jD$ each
over fields of characteristic $2$
(see Remark \ref{rem:24-2} and Section \ref{ss:2}).

\begin{Theorem}
\label{thm3}
Let $d\in\NN$.
For all 
K3 surfaces $X$ of degree $\tD$ over a
field $k$ of characteristic $p=3$,
one has:
\begin{enumerate}
\item[(i)]
if $\jD>42d^2$ and $X$ is not unirational or $X$ has Artin invariant $\sigma>6$, then 
$S_d\leq 24$;
\item[(ii)]
in general, if $\jD>43d^2$, then
$S_d'\leq 30$.
\end{enumerate}
\end{Theorem}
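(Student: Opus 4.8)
The plan is to recycle the lattice-theoretic core behind Theorem~\ref{thm}. For a K3 surface $X$ of degree $2h$ over $k$, the classes of the irreducible rational curves $C$ with $C.H\le d$, together with $H$, span a sublattice of $\Pic(X)$ in which every such curve has self-intersection $-2$, nonnegative intersection with the remaining curves, and intersection at most $d$ with $H$. I expect the structural analysis of these configurations to be insensitive to the characteristic, since $\Pic(X)$ is a hyperbolic lattice for every $p$, and to yield the same trichotomy as in Theorem~\ref{thm}: either the curves span a negative definite lattice, in which case their linear independence together with $\rho(X)\le 22$ and $H^2>0$ bounds their number by $21$; or a hyperbolic subconfiguration whose coefficients are bounded in terms of $d$ forces $h$ into a bounded range, which the hypothesis on $h$ excludes; or the curves all lie in the fibres of a genus one fibration $\pi\colon X\to\PP^1$, with the fibre class spanning the radical of a parabolic configuration. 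The only point at which $p=3$ intervenes is that $\pi$ may now be quasi-elliptic, whose general fibre is a cuspidal rational curve of arithmetic genus one.

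For part~(i) I would first use the hypothesis to rule this out: a K3 surface admitting a quasi-elliptic fibration in characteristic $3$ is supersingular with Artin invariant at most $6$, and supersingularity follows from unirationality; so if $X$ is not unirational, or is supersingular with $\sigma>6$, every genus one fibration on $X$ is elliptic. Then the general fibre is smooth of Euler number $0$, whence $\sum_v e(F_v)\le e(X)=24$ over the singular fibres (wild ramification only decreases the left-hand side). Since for each Kodaira fibre type the number of components is at most its Euler number, with equality exactly for the multiplicative types $\IK_n$, the rational curves arising as fibre components number at most $\sum_v e(F_v)\le 24$. Combined with the bound $21$ in the negative definite case and with $h>42d^2$ excluding the bounded range, this gives $S_d\le 24$.

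For part~(ii) quasi-elliptic fibrations are admitted again, but passing from $S_d$ to $S_d'$ discards precisely the cuspidal curves of arithmetic genus one, hence the general fibres and the fibres of type $\II$ of such a $\pi$. Among fibre components there then remain only the smooth rational components of the additive fibres of types $\IV,\IK_0^*,\IV^*,\II^*$ which occur for quasi-elliptic fibrations in characteristic $3$ (Rudakov--Shafarevich), together with the curve of cusps. I would run the Euler-characteristic identity $e(X)=e(\PP^1)\,e(F_{\mathrm{gen}})+\sum_v\bigl(e(F_v)-e(F_{\mathrm{gen}})\bigr)+(\text{wild terms})$ with $e(F_{\mathrm{gen}})=2$, so that $\sum_v\bigl(e(F_v)-2\bigr)\le 20$; checking case by case that each of these fibre types carries at most $\tfrac32$ times the Euler-number budget $e(F_v)-2$ it consumes, with equality for type $\IV$, one obtains at most $\tfrac32\cdot 20=30$ such curves, the extremal case being a quasi-elliptic fibration with ten fibres of type $\IV$. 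Since the elliptic sub-case only gives $24$, this yields $S_d'\le 30$ in all cases.

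The hardest part, I expect, is the bookkeeping in the last step: one must verify that the curve of cusps does not contribute a further curve of degree at most $d$ beyond the $30$ already counted --- controlling its intersection with $H$ in the near-extremal configurations, and using that once $S_d'>21$ the structural result forces every curve of degree $\le d$ to be a fibre component --- then dispose of the finitely many sporadic configurations with small $h$, and finally enlarge the range of Theorem~\ref{thm}(i) slightly to $h>43d^2$ so that the hyperbolic alternative is genuinely excluded once the $(-2)$-curve configurations peculiar to characteristic~$3$ enter. The remainder is a routine combination of the Rudakov--Shafarevich classification of (quasi-)elliptic K3 fibrations in characteristic $3$ with the lattice machinery already in place.
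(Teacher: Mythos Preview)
Your outline follows the paper's approach closely: the elliptic/parabolic/hyperbolic trichotomy, the Euler-characteristic count of fibre components, and the observation that a quasi-elliptic fibration in characteristic $3$ forces unirationality and Artin invariant $\sigma\le 6$ are exactly what is used. Two minor slips: rational curves of degree $\le d$ need not have $C^2=-2$ --- nodal or cuspidal curves of arithmetic genus one have $C^2=0$, and the hypothesis $h>42d^2$ only gives $C^2\in\{-2,0\}$; and quasi-elliptic fibrations in characteristic $3$ do \emph{not} admit $\IK_0^*$ fibres, only $\IV,\IV^*,\II^*$ (this does not affect your $\tfrac32$-bound). Your worry about the curve of cusps is misplaced: in the parabolic case every curve in $\Gamma$ is orthogonal to the fibre class and hence a fibre component, so a horizontal curve of degree $\le d$ would already make $\Gamma$ hyperbolic, which is handled separately.

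The genuine gap is the hyperbolic case of part~(ii). You call it a routine enlargement of the range to $h>43d^2$, but the constant $43$ is not free: the general hyperbolic argument (shared with Theorem~\ref{thm}) only uses $h>42d^2$, and the extra unit comes from a short but specific list of exceptional configurations that survive when the auxiliary fibration $|D|$ on $\Gamma$ is quasi-elliptic (Remark~\ref{rem:quasi-ell} case~\eqref{item1} and its sequels). In characteristic $3$ these are the configurations $3\tilde E_6+A_2$ with two or three torsion sections and $2\tilde E_6+E_6+A_2$ with three sections; one has to recognise that each of them contains a common hyperbolic subgraph --- a divisor of Kodaira type $\IK_3^*$ with four pairwise disjoint sections, one attached to each exterior component --- and then \emph{optimise} (not just crudely bound) the square of the intrinsic polarisation $H_0$ over the degree box $[0,d]^{12}$ for this $12$-vertex graph. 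The maximum is $86d^2$, attained at the all-$d$ vertex, and this is what yields $2h\le 86d^2$, i.e.\ $h\le 43d^2$. Without enumerating these exceptional configurations and carrying out this optimisation, nothing in your outline pins down the threshold $43d^2$; treating it as bookkeeping misses the actual content specific to Theorem~\ref{thm3}(ii).
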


For all $d$, there are K3 surfaces of degree $2h$ 
with $r_d=30$ over fields of characteristic $3$
for infinitely many $\jD$ (see \ref{ss:3}).

\begin{rem}
\label{rem}
Case {(ii)} in Theorems \ref{thm2}, \ref{thm3} applies to all rational curves
if $d\leq 2$ (as these curves are automatically smooth, we have $S_d=S_d'$).
For $d=1$ and $p=2$, the estimate from Theorem \ref{thm2} {(ii)} can be improved to  
\begin{equation} \label{eq-25}
r_1 = \#\{\text{lines on } X\} \leq 25.
\end{equation}
\end{rem}

%
%
%
%
%

Of course, all bounds from Theorems~\ref{thm},~\ref{thm2},~\ref{thm3}
also apply to smooth rational curves of given degree. 
It may come as a surprise that even under this restriction,
they are attained infinitely often -- even when we consider only 
smooth rational  curves of degree exactly $d$ (see Section \ref{s:smooth}).

Our results also relate to work of Degtyarev on complex K3 surfaces.
In the paper \cite{degt} which inspired our approach, Degtyarev considers lines only,
but his methods 
give precise maxima for the number of lines depending on the degree $\tD$.
In particular, Degtyarev shows that  there exist arbitrary large $h \in \NN$ such that 
the bound from Theorem \ref{thm}
is never attained by lines on K3 surfaces of degree $\tD$ over $\CC$.
A similar pattern persists in positive characteristic (and also for $d=2$),
but the precise analysis exceeds the scope of this paper.
In contrast to the case of lines and conics,
rational curves of degree $3$ and higher exhibit a very regular behaviour
as we show in this paper (also over fields of positive characteristic).




\begin{rem}
Analogous techniques apply to Enriques surfaces,
see \cite{RS-12}.
\end{rem}

\begin{conv}
Since Theorems \ref{thm}--\ref{thm3} stay valid under base extension,
we will assume without loss of generality that the base field $k$ of characteristic $p\geq 0$
is algebraically closed.
All rational curves are assumed to be irreducible.
\end{conv}


\section{Set-up}

We consider polarized K3 surfaces of degree $\tD$, i.e. pairs $(X,H)$ where $X$ is a smooth K3 surface  
over an algebraically closed field $k$ of characteristic $p$, and  $H$ 
is a very ample divisor of square $H^2=\tD$.
Then, the linear system $|H|$ defines an embedding
\[
X \hookrightarrow \PP^{\jD+1}
\]
which is an isomorphism onto its image
(the latter is not contained in any hyperplane of $\PP^{\jD+1}$). 

Conversely, one can check whether a given divisor $H$ on $X$ with $H^2=\tD>0$ is very ample
by the methods developed in \cite{S-D}.
In detail, 
one has:

\begin{crit}
\label{crit}
Let $p \neq 2$.
A divisor $H$ on a K3 surface $X$ 
is very ample if
\begin{enumerate}
\item
$H.C>0$ for every curve $C\subset X$;
\item
$H.E>2$ for every irreducible curve $E\subset X$ of arithmetic genus $1$;
\item
$H^2\geq 4$, and if $H^2=8$, then 
$H$ is not $2$-divisible in $\Pic(X)$.
\end{enumerate}
\end{crit}

We will revisit this criterion in Section \ref{s:ex}
in order to apply it to certain divisors on  K3 surfaces with genus one fibrations.
Then any irreducible curve serves  either as a fibre component $\Theta$ 
or as a multisection $D'$ of index $d'=F.D'$
where $F$ denotes any fibre.
This structure simplifies the analysis of assumptions of Criterion \ref{crit} substantially,
especially since $p_a(D')>0$ implies $d'>1$.

%
%
%


\section{Preparations}
\label{s:prep}

Given a K3 surface $X$ of degree $2h$, 
consider the set
\[
\Gamma = \{\text{rational curves } C\subset X \text{ with } \deg(C)\leq d\}
\]
which we can also interpret as a graph without loops
with multiple edges corresponding to the intersection number $C.C'$ for $C, C'\in\Gamma$.
For ease of exposition, we say that an effective divisor $D$ is \emph{supported on $\Gamma$}
if there is a subgraph $\Gamma'\subset \Gamma$ such that
$$
\mbox{supp}(D)  = \bigcup_{C \in \Gamma'} C \, .
$$
For each curve (or vertex) $C\in\Gamma$, we take two values into consideration: 
the square $C^2$ 
and the degree $d_C=C.H$ of the corresponding curve. 

Together these rational curves generate the formal group 
$$M:=\ZZ\Gamma\subset\mathrm{Div}(X),
$$
equipped with the intersection pairing extending linearly that on the single curves.
We emphasize that $M$ may be degenerate 
(in shorthand $\ker(M)\neq 0$) as it falls into the following three cases:
\begin{enumerate}
\item
elliptic -- $M\otimes\RR$ is negative-definite ($\ker(M)=0$);
\item
parabolic -- $M\otimes\RR$ is negative semi-definite, but not elliptic ($\ker(M)\neq 0$);
\item
hyperbolic -- $M\otimes\RR$ has a one-dimensional positive-definite subspace
and none of greater dimension.
\end{enumerate}
The last condition comes from the Hodge index theorem
which will also enter crucially in the next sections.

The squares $C^2\geq -2$ of the curves in $\Gamma$ play a fundamental role 
in distinguishing these three cases. 
First we discuss the elliptic case.

%
%

\section{Elliptic case}
\label{s:ell}

The results in this section and the next will be independent of $h$,
so they can be used to construct K3 surfaces with up to 24 rational curves
for arbitrary degrees (see Section \ref{s:ex}).

If $\Gamma$ is elliptic, then clearly $C^2=-2$ for any $C\in\Gamma$.
Moreover, if $C.C'> 0$ for $C, C'\in \Gamma$, then
$C.C'=1$ for else $(C+C')^2\geq 0$, and $\Gamma$ would not be elliptic.
In summary,
\begin{eqnarray}
\label{eq:01}
C.C'\in \{0,1\} \;\;\; \text{ for all } \;\; C\neq C'\in\Gamma.
\end{eqnarray}

\begin{lemm}
\label{lem:Dynkin}
If $M$ is elliptic, then it is an orthogonal sum of finitely many Dynkin diagrams (ADE-type).
\end{lemm}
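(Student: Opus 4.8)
The plan is to exploit the two structural facts already established for an elliptic $\Gamma$: every vertex has square $-2$, and distinct vertices meet in $0$ or $1$ point (\eqref{eq:01}). Together these say precisely that $M$ is a lattice generated by $(-2)$-vectors whose associated graph (Gram matrix off the diagonal) is simply laced. The standard classification of such lattices is exactly the ADE classification, so the task is to package this correctly and account for the possibility that $M$ is degenerate as a sublattice of $\mathrm{Div}(X)$ even though, as an abstract lattice on its generators, it must turn out to be nondegenerate and negative definite.

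First I would reduce to the connected case: decompose the graph $\Gamma$ into connected components $\Gamma_1,\dots,\Gamma_r$. Since two curves in different components have intersection number $0$, the sublattice $M$ is the orthogonal direct sum $M = \bigoplus_i M_i$ with $M_i = \ZZ\Gamma_i$, and $M$ is elliptic if and only if each $M_i$ is (negative definiteness is inherited by and assembled from orthogonal summands). So it suffices to show that a connected, negative-definite lattice generated by $(-2)$-classes with a simply laced intersection graph is a single Dynkin diagram of type $A$, $D$ or $E$.

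For the connected case I would argue as in the classical proof that negative-definite simply laced graphs are exactly the ADE diagrams. Concretely: the graph cannot contain a cycle, since if $C_1,\dots,C_k,C_1$ is a cycle then $v=C_1+\dots+C_k$ has $v^2 = -2k + 2k = 0$, contradicting ellipticity; hence the graph is a tree. Next, a vertex of valency $\geq 4$, a configuration with two vertices of valency $\geq 3$, or the $\widetilde D_4,\widetilde E_6,\widetilde E_7,\widetilde E_8$ "extended" subgraphs all produce an isotropic (or positive) vector in $M$ by the same kind of explicit linear combination; this is exactly the list of forbidden subgraphs that forces a tree with at most one branch point of valency $3$ and short enough arms — i.e.\ one of $A_n, D_n, E_6, E_7, E_8$. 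One may alternatively cite that the Gram matrix of such a graph is, up to sign, a generalized Cartan matrix of finite type, whence ADE by the Cartan–Killing classification. Either way, this also shows $M_i$ is nondegenerate, so the degeneracy mentioned for $M$ in general does not actually occur here — consistent with $\ker(M)=0$ in the elliptic case.

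The main obstacle is not the combinatorics, which is classical, but making sure the set-up genuinely hands us a simply laced graph: we need $C.C'\in\{0,1\}$ and $C^2=-2$ for \emph{all} relevant vertices, and these are precisely what \eqref{eq:01} and the opening remarks of Section~\ref{s:ell} provide. The remaining care is bookkeeping: checking that "orthogonal sum of Dynkin diagrams" is the right phrasing (each $M_i$ is the root lattice of that Dynkin type, with the curves as simple roots) and that $\Gamma$ being finite — which holds since $\Gamma$ is a finite set of curves — guarantees finitely many summands. I would therefore keep the write-up short: reduce to connected components, rule out cycles, rule out the extended-Dynkin subconfigurations, and invoke the ADE classification to conclude.
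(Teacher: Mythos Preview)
Your approach is essentially the paper's: both use that $C^2=-2$ and $C.C'\in\{0,1\}$ to recognise $M$ as a simply-laced root lattice and then invoke the ADE classification. You unpack the classification (no cycles, no extended-Dynkin subgraphs) where the paper simply cites ``the classification of (negative-definite) root lattices'', which is fine and arguably more self-contained.

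The one genuine gap is finiteness. You write ``$\Gamma$ being finite --- which holds since $\Gamma$ is a finite set of curves'' as if this were already given, but nothing in the setup establishes it; the paper is explicitly aware that $\Gamma$ may a priori be infinite (cf.\ the parabolic case). The paper closes this by observing that $M$ is nondegenerate and embeds into $\Pic(X)$, which is hyperbolic of rank at most $22$, giving $\rank M\leq 21$ (this is \eqref{eq:rk21}, and it is also what feeds directly into Corollary~\ref{cor:21}). Without such a rank bound your forbidden-subgraph argument does not conclude: an infinite chain of type $A_\infty$ is negative-definite on every finite linear combination and contains none of the extended Dynkin diagrams you exclude. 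The fix is one line --- use the embedding into $\Pic(X)$ as the paper does, or argue that $(-2)$-curves are rigid so only finitely many lie in a bounded-degree Hilbert scheme --- but it needs to be said.
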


\begin{proof} 
By assumption, the lattice $M$ is negative-definite and non-degenerate.
Since it embeds into $\Pic(X)$ which is hyperbolic of rank $\rho(X)\leq 22$
(or $\rho(X)\leq 20$ if $p=0$, see Remark \ref{rem-elliptic19}),
we find that 
\begin{eqnarray}
\label{eq:rk21}
\rank M\leq 21.
\end{eqnarray}
The claim now  follows from \eqref{eq:01}
and the classification of (negative-definite) root lattices. 
\end{proof}

We can now derive the first  step towards Theorem \ref{thm}:

\begin{cor}
\label{cor:21}
If $M$ is elliptic, then $\#\Gamma \leq \rank M \leq 21$.
\end{cor}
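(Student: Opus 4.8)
The plan is to bound $\#\Gamma$ by $\rank M$ by showing that the curves in $\Gamma$ are linearly independent in $M\otimes\QQ$; since $\rank M\leq 21$ was already established in \eqref{eq:rk21}, the corollary then follows at once.

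So suppose
\[
\sum_{C\in\Gamma}a_C\,C=0,\qquad a_C\in\QQ,
\]
and the goal is to show that every $a_C$ vanishes. Split the relation into its positive and negative parts by setting $D^+=\sum_{a_C>0}a_C\,C$ and $D^-=\sum_{a_C<0}(-a_C)\,C$, so that $D^+=D^-$ as effective $\QQ$-divisors. No curve of $\Gamma$ occurs in both sums, so expanding $(D^+)^2=D^+.D^-$ produces a sum of terms $a_C(-a_{C'})\,(C.C')$ with $C\neq C'\in\Gamma$; each coefficient is positive, and each $C.C'\geq 0$ because $C$ and $C'$ are distinct irreducible curves. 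Hence $(D^+)^2\geq 0$. Since $\Gamma$ is elliptic, $M$ is negative-definite, so in fact $(D^+)^2\leq 0$, with equality only for $D^+=0$; therefore $D^+=D^-=0$.

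It remains to see that $D^+=\sum_{a_C>0}a_C\,C=0$ forces the index set $\{C\in\Gamma:a_C>0\}$ to be empty. Pairing with the ample class $H$ gives $\sum_{a_C>0}a_C\,d_C=0$, a sum of strictly positive terms, which is impossible unless that set is empty; the same reasoning applied to $D^-$ rules out $a_C<0$. Hence all $a_C=0$, the curves of $\Gamma$ are linearly independent, and $\#\Gamma\leq\rank M\leq 21$.

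I do not expect a genuine obstacle here, as the argument is elementary; the one point to keep in mind is that negative-definiteness of $M$ — which is exactly the elliptic hypothesis — is what upgrades the inequality $(D^+)^2\geq 0$ (a purely formal consequence of non-negativity of intersection numbers of distinct irreducible curves together with ampleness of $H$) into the vanishing $D^+=0$. One could alternatively extract linear independence from the ADE classification of Lemma \ref{lem:Dynkin}, checking component by component that an $A_n$-, $D_n$- or $E_n$-configuration contains at most $n$ roots pairwise meeting non-negatively, but the direct computation above is cleaner and avoids the case distinction.
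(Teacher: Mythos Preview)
Your proof is correct and reaches the same conclusion as the paper, but it works considerably harder than necessary. The paper's one-line argument simply observes that $\#\Gamma=\rank M$ (and then invokes \eqref{eq:rk21}); this equality is immediate because $M=\ZZ\Gamma\subset\mathrm{Div}(X)$ is by definition generated by the curves in $\Gamma$, and distinct irreducible curves are automatically linearly independent in $\mathrm{Div}(X)$. Your $D^+/D^-$ decomposition argument is a valid way to reprove this independence from scratch using only the intersection form and ampleness of $H$ --- in effect you are re-establishing that $M$ injects into $\mathrm{Num}(X)$, which already went into the proof of \eqref{eq:rk21} in Lemma~\ref{lem:Dynkin}. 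Note in particular that once you have $D^+=0$ in $M\otimes\QQ$ from negative-definiteness, the pairing with $H$ in your final step is redundant: the $C$'s are a $\QQ$-basis of $M\otimes\QQ$ by construction, so $D^+=0$ already forces all coefficients to vanish.
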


\begin{proof}
Since $M$ is elliptic we have $\#\Gamma = \rank M$. Thus the claim follows from \eqref{eq:rk21}.
\end{proof}

\begin{rem} \label{rem-elliptic19}
Equality 
can only be attained in characteristics $p\leq 19$ by \cite{Shimada}.
Over $\CC$, one even has the bound $\#\Gamma\leq 19$, since $\rho(X)\leq 20$  by Lefschetz' $(1,1)$-theorem,
equality being attained, for instance, by  extremal elliptic K3 surfaces
(cf.\ \cite{MP}, 
\cite{Shimada-Zhang}). The condition in Theorem \ref{thm} (ii) can thus be improved to $S_d>19$.
\end{rem}


\section{Parabolic case}
\label{s:para}

If $\Gamma$ is parabolic, then this implies as before that $C^2\leq 0$ for all $C\in\Gamma$.
Moreover, for any isotropic divisor $D\in M$, we obtain 
\begin{eqnarray}
\label{eq:isoD}
D.C=0 \;\;\; \text{ for all } \;\;\; C\in\Gamma,
\end{eqnarray}
since else $(2D\pm C)^2>0$. In particular, this applies to all curves $C\in\Gamma$ with $C^2=0$:
$C\in\ker(M)$.
All other curves $C\in\Gamma$ satisfy $C^2=-2$.
Arguing as before, we find 
\begin{eqnarray}
\label{eq:012}
C.C' \in\{0,1,2\} \;\; \text{ for all } \;\; C\neq C'\in\Gamma. 
\end{eqnarray}

\begin{lemm}
\label{lem:ortho}
If $M$ is parabolic, then it is an orthogonal sum of finitely many Dynkin diagrams, finitely many extended Dynkin diagrams ($\tilde A\tilde D\tilde E$-type) and isotropic vertices. Moreover,  at least one summand is either an isotropic vertex or an extended Dynkin diagram. 
\end{lemm}

\begin{rem}
In the parabolic case, $\Gamma$ may a priori involve infinitely many isotropic vertices,
cf. the quasi-elliptic case in Section \ref{s:exc}.
\end{rem}

\begin{proof} 
%
Assume that $M$ is parabolic. If there is $C\in\Gamma$ with $C^2=0$, then $C\in\ker(M)$ as noticed above.
This leads to an orthogonal decomposition 
$M = M_z \oplus M_{r}$
with all vertices $C \in \Gamma \cap M_{z}$ satisfying the condition $C^2 = 0$,
and $\Gamma \cap M_{r}$  comprising exactly the roots. Moreover, we have
$C.C'\in\{0,1,2\}$ for any $C\neq C'\in\Gamma \cap M_{r}$ by \eqref{eq:012}.

If $C, C' \in  \Gamma \cap M_{r}$ satisfy $C.C'=2$, then $D=C+C' \in M_{r}$ is isotropic
 and we get the extended Dynkin diagram $\tilde A_1$, that defines an orthogonal summand of $M_{r}$
 by \eqref{eq:isoD}.
Since each such summand gives $A_1\hookrightarrow \Pic(X)$, 
we can find only finitely many such summands in $M_{r}$.
Away from these summands, we may assume that 
$C.C'\in\{0,1\}$ for any $C\neq C'\in\Gamma \cap M_{r}$ by \eqref{eq:012}. 
The classification of (negative semi-definite) root lattices 
shows that $M_r$ further decomposes into (extended) Dynkin diagrams as claimed.
Since $M/\ker(M)$ gives a negative-definite sublattice of $\Pic(X)$, both 
the number and the rank of these summands are bounded.
\end{proof}

\begin{lemm}
\label{lem:para}
If $M$ is parabolic, then there is a genus one fibration
\begin{eqnarray}
\label{eq:fibr}
X \to \PP^1
\end{eqnarray}
such that
\[
\Gamma =  \{\text{rational fibre components $\Theta$ of \eqref{eq:fibr} with } \deg(\Theta)\leq d\}.
\]
\end{lemm}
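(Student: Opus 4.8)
The plan is to extract a genus one fibration from the isotropic part of $M$, which exists by Lemma \ref{lem:ortho}. First I would produce an isotropic, nef divisor $F$ on $X$. If $\Gamma$ contains a vertex $C$ with $C^2=0$, take $E=C$; otherwise take $E = C+C'$ for a pair with $C.C'=2$, which is isotropic by \eqref{eq:012} and \eqref{eq:isoD}; in either case $E$ is an effective isotropic class lying in $\ker(M)$. To make it nef, note that by \eqref{eq:isoD} $E.C''=0$ for every $C''\in\Gamma$, so $E$ already meets every rational curve in $\Gamma$ non-negatively; any remaining $(-2)$-curve intersecting $E$ negatively can be removed by applying a composition of reflections (a chain of contractions of $(-2)$-curves), which does not change $E^2=0$ and, after finitely many steps, yields a nef isotropic class $F$ with $F\equiv E$ up to such reflections. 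By the standard theory of linear systems on K3 surfaces (Pjatecki{\u\i}-{\v S}apiro--{\v S}afarevi{\v c}), $|F|$ is a base-point-free pencil defining a genus one fibration \eqref{eq:fibr}, with $F$ the class of a fibre.

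Next I would show that $\Gamma$ consists exactly of fibre components of this fibration. For every $C\in\Gamma$ we have $F.C = E.C = 0$ by \eqref{eq:isoD}, so $C$ is contained in a fibre of \eqref{eq:fibr}; since $C$ is irreducible, it is a fibre component. Conversely, if $\Theta$ is a rational fibre component with $\deg(\Theta)=\Theta.H\le d$, then $\Theta$ is an irreducible rational curve of degree at most $d$, hence $\Theta\in\Gamma$ by the definition of $\Gamma$. This gives the claimed equality of sets.

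The step I expect to be the main obstacle is the passage from the abstract isotropic class $E\in\ker(M)$ to a genuinely nef class $F$ defining a fibration. One must be careful that the reflections used to move $E$ into the nef cone are reflections in $(-2)$-classes of $\Pic(X)$ that are effective, so that the resulting $F$ is again effective and isotropic; here one uses that the nef cone of a K3 surface is a fundamental domain for the Weyl group action and that $E$, being isotropic and in the closure of the positive cone, is $W$-equivalent to a nef class. A secondary point to address is that a priori $F$ could be of the form $m F_0$ for a primitive fibre class $F_0$; but since $E$ arises from an irreducible curve $C$ (or a sum of two such meeting in an $A_1$ configuration) one checks $F$ is primitive, or one simply replaces $F$ by its primitive part $F_0$, which still satisfies $F_0.C=0$ for all $C\in\Gamma$ and still defines the same fibration. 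Either way the conclusion of the lemma is unaffected. Finally one notes that \eqref{eq:fibr} has at least one reducible or non-smooth fibre, since $\Gamma\cap M_r\ne\emptyset$ forces a $(-2)$-curve in a fibre; this is consistent with, though not needed for, the statement.
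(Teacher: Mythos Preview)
There is a genuine gap in your case analysis for producing the isotropic class $E$. Your dichotomy ``either some $C\in\Gamma$ has $C^2=0$, or some pair has $C.C'=2$'' does not exhaust the parabolic situations: $M$ can be parabolic with all vertices of square $-2$ and all pairwise intersections in $\{0,1\}$, for instance when $\Gamma$ contains an extended Dynkin configuration of type $\tilde A_n$ ($n\geq 2$), $\tilde D_n$ or $\tilde E_n$. In such cases neither branch of your construction applies, and you have no isotropic $E$ to start from. The paper closes this gap by invoking the full conclusion of Lemma~\ref{lem:ortho}: $M$ always contains either an isotropic vertex or an extended Dynkin diagram, and in every case the associated isotropic vector (the curve itself, or the weighted sum of components prescribed by the Kodaira classification) is an effective divisor $D$ of Kodaira type.

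A second point: the reflection step you flag as the main obstacle is in fact unnecessary, and introducing it creates a new problem you do not resolve. Any effective divisor $D$ of Kodaira type is automatically nef: one has $D.C_i=0$ for each component $C_i$ of its support (this is what makes the extended Dynkin vector isotropic), and $D.C'\geq 0$ for any other curve $C'$ since all summands are effective. Hence $|D|$ directly defines the genus one fibration, with $D$ a fibre. By contrast, if you did pass to $F=\sigma(E)$ via reflections in $(-2)$-curves outside $\Gamma$, then $F\neq E$ as divisor classes, and your assertion $F.C=E.C=0$ for $C\in\Gamma$ would require justification that you do not provide. Once you use the Kodaira-type divisor $D$ from Lemma~\ref{lem:ortho}, both issues disappear and the rest of your argument (every $C\in\Gamma$ has $D.C=0$ by \eqref{eq:isoD}, hence is a fibre component; conversely every rational fibre component of degree $\leq d$ lies in $\Gamma$) goes through exactly as in the paper.
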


\begin{proof}
By Lemma \ref{lem:ortho}, $M$ contains an isotropic vertex $v_0 \in \Gamma$ or an extended Dynkin diagram.
Either yields an isotropic divisor $D$ of Kodaira type,
i.e.\ a nodal or cuspidal curve of arithmetic genus one
or a configuration of smooth rational curves (with multiplicities) which appears in 
Kodaira's and Tate's classification of singular fibres of elliptic surfaces \cite{K}, \cite{Tate}.
The linear system $|D|$ gives the claimed genus one fibration \eqref{eq:fibr}.
By construction, $D$ is a fibre of \eqref{eq:fibr},
and all vertices of $\Gamma$ feature as fibre components
of the fibration by \eqref{eq:isoD}.
Conversely, any rational component $\Theta$ of a singular fibre
of degree at most $d$ appears in $\Gamma$
while multisections would force $\Gamma$ to be hyperbolic.
\end{proof}

We shall now discuss how Lemma \ref{lem:para} fits together
with the bound from Theorem \ref{thm} (i).
If the general fibre of the genus one fibration  \eqref{eq:fibr} is smooth,
then the number of rational fibre components
can  be bounded by topological arguments.
Indeed, in terms of the Euler--Poincar\'e characteristic, we have
\begin{eqnarray}
\label{eq:e=24}
24 = e(X) = \sum_{t\in\PP^1} e(F_t) + \delta_t.
\end{eqnarray}
Here $\delta_t\geq 0$ accounts for the wild ramification
(which only occurs for certain additive fibre types in characteristics $2$ and $3$, see
\cite{SSc}).
Moreover, for a singular fibre $F_t$ with $m_t$ irreducible components, one has
\begin{eqnarray}
\label{eq:comps}
\;\;\;
\hspace{.5cm}
e(F_t) =
\begin{cases}
m_t, &
\text{ if } F_t \text{ is multiplicative (type $\IK_n, n>0$)},\\
m_t+1, &
\text{ if } F_t \text{ is additive}.
\end{cases}
\end{eqnarray}
%
By \cite{Tate-genus}, the general fibre can be non-smooth only in characteristics $2,3$,
so outside those characteristics we have
\begin{eqnarray}
\label{eq:<=24}
\#\Gamma \leq \{\text{rational fibre components of \eqref{eq:fibr}}\} \leq 24.
\end{eqnarray}
This settles Theorem \ref{thm} (i) in the parabolic case.

In 
characteristics $2$ and $3$,
the genus one fibration \eqref{eq:fibr} may also be quasi-elliptic,
i.e. the general fibre is a cuspidal cubic
(as excluded in the curve count $S_d'$).
Note that this automatically implies the surface to be unirational,
so if $X$ is not unirational,
Theorem \ref{thm2} (i) follows from what we have seen above.
Moreover, quasi-elliptic K3 surfaces in characteristic $3$ automatically satisfy $\sigma\leq 6$
(just compute the sublattice generated by fibre components and the curve of cusps),
so Theorem \ref{thm3} (i) follows as well.

To conclude the proofs, let \eqref{eq:fibr} be quasi-elliptic,
but remove all cuspidal rational curves with $p_a=1$ from consideration.
Hence Lemma \ref{lem:para} leads
to
\[
\Gamma' =   \{\text{smooth rational fibre components $\Theta$ of \eqref{eq:fibr} with } \deg(\Theta)\leq d\}.
\]
That is, we only have to inspect the reducible fibres.
Since the general fibre $F$ has $e(F)=2$, the formula for the Euler-Poincar\'e characteristic 
of a quasi-elliptic fibration reads
\begin{eqnarray}
\label{eq:e=24'}
24 = e(X) = 4 + \sum_{t\in\PP^1} (e(F_t)-2).
\end{eqnarray}
Since all fibres are additive, \eqref{eq:comps} gives 
\[
\# \Gamma' \leq  20 + \#\{\text{reducible fibres}\}.
\]
By \eqref{eq:e=24'}, there are at most 20 reducible fibres,
so the estimate from Theorem \ref{thm2} (ii) for $p=2$ follows readily (and is attained by
the fibrations from \cite{RuS},
see \ref{ss:2}).
If $p=3$, then the only possible reducible fibre types of a quasi-elliptic fibration are $\IV, \IV^*, \II^*$,
so $e(F_t)-2\geq 2$ for a reducible fibre,
and \eqref{eq:e=24'} allows for at most 10 of them.
Hence $\#\Gamma'\leq 30$ as stated in Theorem \ref{thm3} (ii).
To see that the bound is attained (for certain $\jD$), confer \ref{ss:3}.


%
%
%


\section{Intrinsic polarization}

It remains to study the case where $M$ is hyperbolic
 and where finally  the degree of the K3 surface $X$ plays a role. 
Extending on \cite{degt}, we 
consider the hyperbolic lattice
\[
L = M/\ker(M)
\]
and
try to equip $L\otimes\QQ$ with an intrinsic polarization $H_\Gamma\in L\otimes\QQ$
obtained by solving for 
$$
C.H_\Gamma=d_C \;\;\;\forall \, C\in\Gamma.
$$
Note that $H_\Gamma$ need not exist at all, since the system of equations tends to be overdetermined.
The intrinsic polarization has to be compared to the canonical way of enhancing $M$
by the 
polarization $H$ with $H^2=\tD$
where we set 
$$
C.H=d_C \;\;\;\forall \, C\in\Gamma.
$$
This leads to the non-degenerate lattice
$
L_{\polD} = (\ZZ\Gamma + \ZZ H)/\ker(\ZZ\Gamma + \ZZ H).
$

\begin{lemm}
\label{lem:embed}
If $L_{\polD}$ is hyperbolic, then $L$ embeds into $L_{\polD}$.
\end{lemm}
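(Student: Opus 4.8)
The plan is to exhibit the embedding via the tautological map. Let $\pi\colon \ZZ\Gamma+\ZZ H\to L_{\polD}$ be the canonical projection, and consider the composite homomorphism
\[
\phi\colon M=\ZZ\Gamma\hookrightarrow \ZZ\Gamma+\ZZ H\xrightarrow{\ \pi\ } L_{\polD}.
\]
I would show that $\ker\phi=\ker(M)$. Granting this, $\phi$ descends to an injection $L=M/\ker(M)\hookrightarrow L_{\polD}$, and this map is automatically an isometric embedding, since the pairing on $L$ and the pairing on $L_{\polD}$ are both induced from the intersection form on $X$ restricted to $\ZZ\Gamma$, respectively $\ZZ\Gamma+\ZZ H$.

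One inclusion is formal: $\ker\phi=M\cap\ker(\ZZ\Gamma+\ZZ H)$, and any $v$ in this intersection is orthogonal to all of $\ZZ\Gamma+\ZZ H$, in particular to $M$, so $v\in\ker(M)$. The reverse inclusion $\ker(M)\subseteq\ker\phi$ is the substance of the lemma: it amounts to proving that $v.H=0$ for every $v\in\ker(M)$, i.e.\ that the radical of $M$ remains orthogonal to $H$ after $H$ is adjoined. In general a sublattice can have a strictly larger radical than the ambient lattice, so this is exactly where hyperbolicity of $L_{\polD}$ (together with the standing hypothesis that $M$ is hyperbolic) must be used.

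Here is how I would force it. Suppose, for contradiction, that some $v_0\in\ker(M)$ has $a:=v_0.H\neq 0$. Since $M$ is hyperbolic, $M\otimes\RR$ contains a vector of positive square, so after clearing denominators there is $w\in M$ with $w^2>0$; note $v_0.w=0$ because $v_0\in\ker(M)$ and $w\in M$. The Gram matrix of the triple $(v_0,H,w)$ then has determinant $-a^2w^2<0$; in particular $v_0,H,w$ are linearly independent over $\QQ$, and since this symmetric form is not negative definite (it takes the positive value $w^2$ on $w$), its signature must be $(2,1)$. Thus $(\ZZ\Gamma+\ZZ H)\otimes\QQ$ contains a rank-three subspace with two positive directions, so the non-degenerate form on $L_{\polD}$ has at least two positive directions, contradicting the assumption that $L_{\polD}$ is hyperbolic.

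The main obstacle is precisely this signature argument: nothing short of controlling the sign structure of $L_{\polD}$ prevents the radical of $M$ from pairing nontrivially with $H$. Once $v.H=0$ is established for all $v\in\ker(M)$, we get $\ker(M)\subseteq\ker(\ZZ\Gamma+\ZZ H)$, hence $\ker\phi=M\cap\ker(\ZZ\Gamma+\ZZ H)=\ker(M)$, and the desired isometric embedding $L\hookrightarrow L_{\polD}$ drops out formally.
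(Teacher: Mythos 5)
Your proof is correct and follows essentially the same route as the paper: reduce to showing $\ker(\ZZ\Gamma)\subseteq\ker(\ZZ\Gamma+\ZZ H)$, then derive a contradiction from the rank-three Gram matrix of $(v_0,H,w)$ having determinant $-a^2w^2<0$ while the ambient signature permits only one positive direction. The only (harmless) difference is that you extract the signature constraint from the stated hypothesis that $L_{\polD}$ is hyperbolic, whereas the paper invokes the Hodge index theorem on $X$ directly.
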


\begin{proof}
It suffices to verify that 
\[
\ker(\ZZ\Gamma)\subset\ker(\ZZ\Gamma + \ZZ H).
\]
Assume to the contrary that there is some $w\in\ker(\ZZ\Gamma)\setminus\ker(\ZZ\Gamma + \ZZ H)$.
Picking some positive vector $x\in\ZZ\Gamma$, we obtain an auxiliary rank 3 lattice $L'=\ZZ w + \ZZ x + \ZZ H$
with intersection form
\[
Q = 
\begin{pmatrix}
0 & 0 & w.H\\
0 & x^2 & x.H\\
w.H & x.H & H^2
\end{pmatrix}
\]
This has determinant $\det(Q)=-x^2(w.H)^2<0$.
On the other hand, $L'$ is hyperbolic by the Hodge index theorem,
so $\det(Q)>0$ gives the required contradiction.
\end{proof}

%
%

We are now in the position to formulate and prove the following key reduction result:

\begin{prop}
\label{prop:h^2}
If $L_{\polD}$ is hyperbolic, then $H_\Gamma$ exists and $\tD \leq H_\Gamma^2$.
\end{prop}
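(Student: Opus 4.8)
The plan is to build $H_\Gamma$ inside $L_\polD\otimes\QQ$ by orthogonally projecting $H$ onto the subspace spanned by $L=M/\ker(M)$, and then to read off $\tD=H^2\leq H_\Gamma^2$ from the fact that the complementary component has non-positive square. Concretely: by Lemma \ref{lem:embed}, under the hypothesis that $L_\polD$ is hyperbolic we have an embedding $L\hookrightarrow L_\polD$, so it makes sense to work inside $V:=L_\polD\otimes\QQ$ and let $W:=L\otimes\QQ\subseteq V$. Since $L$ is hyperbolic (being a quotient of the hyperbolic $M$ — or, if $M$ were elliptic/parabolic, $H_\Gamma$ issues are handled separately, but here we are in the hyperbolic case) the pairing restricted to $W$ is non-degenerate, so $V=W\oplus W^\perp$ as an orthogonal direct sum over $\QQ$. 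Write $H=H_\Gamma+H'$ accordingly, with $H_\Gamma\in W$ and $H'\in W^\perp$.

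The first step is to check that this $H_\Gamma$ is exactly the solution of the defining system $C.H_\Gamma=d_C$ for all $C\in\Gamma$. This is immediate: for any $C\in\Gamma$ the image $\bar C$ of $C$ in $L$ lies in $W$, so $\bar C.H'=0$, whence $\bar C.H_\Gamma=\bar C.H=C.H=d_C$; and since the $\bar C$ span $W$ this characterizes $H_\Gamma$ uniquely. So $H_\Gamma$ exists, proving the first assertion. The second step is the inequality. We have $H^2=H_\Gamma^2+2H_\Gamma.H'+(H')^2=H_\Gamma^2+(H')^2$ by orthogonality, so it suffices to show $(H')^2\leq 0$. The subspace $W$ is hyperbolic, hence contains a positive vector; by the Hodge index theorem applied to the hyperbolic lattice $L_\polD$, the ambient $V$ has exactly a one-dimensional positive-definite part, so the orthogonal complement $W^\perp$ of a subspace already containing a positive vector must be negative semi-definite. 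Therefore $(H')^2\leq 0$ and $\tD=H^2=H_\Gamma^2+(H')^2\leq H_\Gamma^2$.

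The main obstacle is really a bookkeeping one rather than a deep one: I need to be careful that $L$ is non-degenerate so that the orthogonal splitting $V=W\oplus W^\perp$ exists over $\QQ$ — this uses that $L=M/\ker(M)$ is non-degenerate by construction and, in the hyperbolic case under discussion, genuinely hyperbolic. One must also make sure the hypothesis "$L_\polD$ hyperbolic" is used in the right place: it is what licenses Lemma \ref{lem:embed} (so that $W\subseteq V$ is a subspace and not merely the image of a possibly non-injective map), and it is what makes the Hodge-index signature argument applicable to $V$. A minor point worth a sentence: if $W=V$, i.e. $H$ already lies in $W$, then $H'=0$, $H_\Gamma=H$, and $\tD=H_\Gamma^2$, so the inequality is still (trivially) satisfied; no case distinction is otherwise needed.
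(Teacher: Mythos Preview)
Your argument is correct and follows essentially the same route as the paper's own proof: invoke Lemma~\ref{lem:embed} to embed $L$ into $L_\polD$, take the orthogonal decomposition $L_\polD\otimes\QQ = (L\otimes\QQ)\perp(L^\perp\otimes\QQ)$, write $H=H_\Gamma+H_\Gamma^\perp$ accordingly, and use that $L^\perp$ is negative (semi-)definite because both $L$ and $L_\polD$ are hyperbolic. Your version is slightly more explicit in verifying that the projected $H_\Gamma$ actually solves the defining system $C.H_\Gamma=d_C$, and in handling the degenerate case $W=V$, but the substance is the same.
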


The proof of Proposition \ref{prop:h^2} follows the ideas of \cite{degt}
(which also states the converse implication).
For completeness, we provide a direct argument.

\begin{proof}
By Lemma \ref{lem:embed}, the lattice $L$ embeds into $L_{\polD}$ with corank 0 or 1.
The underlying vector spaces thus admit an orthogonal decomposition
\[
L_{\polD}\otimes\QQ = (L\otimes\QQ) \perp (L^\perp\otimes\QQ).
\]
Here $L^\perp$ is either zero or negative-definite,
since both lattices $L$ and $L_{\polD}$ are hyperbolic by assumption.
Express $H$ uniquely as
\[
H = H_\Gamma + H_\Gamma^\perp, \mbox{ where } H_\Gamma\in L\otimes\QQ, \;\; H_\Gamma^\perp\in L^\perp\otimes\QQ.
\]
Hence $H_\Gamma$ exists, 
and
\begin{equation} \label{ineq-orthdecomp}
H^2 = H_\Gamma^2 + (H_\Gamma^\perp)^2 \leq H_\Gamma^2
\end{equation}
as stated.
\end{proof}

\begin{rem}
\label{rem:Gamma_0}
The above arguments also apply to any hyperbolic subgraph $\Gamma_0\subset\Gamma$.
This will be used in the sequel, 
for instance in Example \ref{ex:D_6}.
\end{rem}


Proposition \ref{prop:h^2} forms a cornerstone of our argument
due to the following consequence:

\begin{cor}
\label{cor:finite}
If
 $\Gamma$ is hyperbolic, then it
 can be realized by rational curves on K3 surfaces of degree $\tD$  only for a finite number of integers $\jD$.
 \end{cor}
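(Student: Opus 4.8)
The plan is to leverage Proposition \ref{prop:h^2} directly. Suppose $\Gamma$ is hyperbolic and realized by rational curves of degree at most $d$ on a K3 surface $X$ of degree $\tD$. Then $M = \ZZ\Gamma$ sits inside $\Pic(X)$, which is hyperbolic, so $L_{\polD} = (\ZZ\Gamma + \ZZ H)/\ker(\ZZ\Gamma+\ZZ H)$ is a quotient of a sublattice of $\Pic(X)$ and hence is itself hyperbolic (it is non-degenerate by construction, and a non-degenerate sublattice of a hyperbolic lattice is elliptic or hyperbolic; since $\Gamma\subset L_{\polD}$ is already hyperbolic it must be hyperbolic). Thus the hypothesis of Proposition \ref{prop:h^2} is satisfied, and we conclude $\tD \leq H_\Gamma^2$.

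The crucial point is that $H_\Gamma \in L\otimes\QQ$ is an \emph{intrinsic} quantity: it is the unique solution in $L\otimes\QQ$ of the linear system $C.H_\Gamma = d_C$ for $C\in\Gamma$, and both the lattice $L = M/\ker(M)$ and the degree data $d_C = C.H \leq d$ depend only on the abstract graph $\Gamma$ together with its vertex-degrees, not on $X$ or on $\tD$. Hence $H_\Gamma^2$ is a fixed rational number determined by $\Gamma$ alone. Since $\tD = H^2 \leq H_\Gamma^2$ and $\tD$ is a positive even integer, there are only finitely many possible values of $\jD$, namely those with $2\le 2\jD\le H_\Gamma^2$.

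I would phrase the argument as: fix the hyperbolic graph $\Gamma$; by the discussion preceding Proposition \ref{prop:h^2} (and Lemma \ref{lem:embed}), whenever $\Gamma$ is realized on a polarized K3 surface $(X,H)$ of degree $\tD$, the lattice $L_{\polD}$ is hyperbolic, so Proposition \ref{prop:h^2} applies and gives $\tD \leq H_\Gamma^2$; since the right-hand side depends only on $\Gamma$, only finitely many $\jD$ occur.

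The only subtlety worth spelling out is the verification that $L_{\polD}$ is indeed hyperbolic in every realization, i.e.\ that adding $H$ cannot make the signature jump past $(1,*)$: this follows because $\ZZ\Gamma + \ZZ H \subset \Pic(X)$ lies in a lattice of signature $(1,\rho(X)-1)$, so its non-degenerate quotient $L_{\polD}$ has at most one positive direction, and it has at least one since $\Gamma\subset L_{\polD}$ is hyperbolic. I do not anticipate a genuine obstacle here — the real content has already been placed in Proposition \ref{prop:h^2}; this corollary is the bookkeeping that turns the inequality $\tD\le H_\Gamma^2$ into the finiteness statement.
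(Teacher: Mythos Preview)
Your proposal is correct and matches the paper's intended argument: the paper states the corollary immediately after Proposition~\ref{prop:h^2} without an explicit proof, treating it as the direct consequence you spell out. Your additional verification that $L_{\polD}$ is hyperbolic (at most one positive direction from $\Pic(X)$, at least one inherited from the hyperbolic $\Gamma$) is the expected bookkeeping and is fine.
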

 
 \subsection*{Proof of Theorem \ref{thm} (ii)}
 
 Assume that $\#\Gamma>21$.
 By Corollary \ref{cor:21}, $\Gamma$ cannot be elliptic.
We claim that it is not hyperbolic, either.
Otherwise, there is some elliptic or parabolic $\Gamma_0\subset\Gamma$
and a single curve $C\in\Gamma$ such that $\Gamma'=\Gamma_0\cup\{C\}$ is hyperbolic.
But then there are only finitely many possibilities for $\Gamma'$,
since the shape of $\Gamma_0$ is limited by Lemmas \ref{lem:Dynkin} and 
\ref{lem:ortho}
while the Hodge Index Theorem gives
\begin{eqnarray}
\label{eq:C^2}
C^2 \leq \frac{(C.H)^2}{H^2} \leq \frac{d^2}{\tD} \leq \frac{d^2}2 
\end{eqnarray}
and
\begin{equation} \label{eq-bezout}
C.C' \leq d_C d_{C'} \;\;\; \forall C\, '\in\Gamma.
\end{equation}
Corollary \ref{cor:finite} thus gives an upper bound for $h$ when $\Gamma$ is hyperbolic.
For $h\gg 0$, $\Gamma$ therefore is parabolic, and Lemma \ref{lem:para} proves the claim.
\qed

\begin{rem}
For $\jD\gg 0$, 
this gives a quick proof of the bounds in  Theorems~\ref{thm}-\ref{thm3}
based on the results from Section \ref{s:para}.
Previously, this has been made effective only for the case $d=1$ 
over the field $k=\CC$, see \cite{degt}.
\end{rem}

\section{Preparations for the hyperbolic case}

In this section, we explain how  the above ideas lead to an effective constraint on the degree of the K3 surface in the hyperbolic case.  
Throughout this section, we assume that
\[
h>42d^2
\]
as in Theorem \ref{thm} (i), \ref{thm2} (i) and \ref{thm3} (i).
Certainly \eqref{eq:C^2} implies
\begin{eqnarray}
\label{eq:02}
C^2 \in\{0, -2\} \;\;\; \forall \, C\in\Gamma,
\end{eqnarray}
and \eqref{eq-bezout} can be improved drastically to $C.C'\leq d_Cd_{C'}/h$ in case $C^2, C'^2\geq 0$.
Thus we get
\begin{eqnarray}
C.C'=0 \;\;\; \forall \text{ isotropic } C, C'\in\Gamma,
\end{eqnarray}
so any two such curves are fibres of the same genus one fibration (given by $|C|=|C'|$),
and as such they are linearly equivalent.

We can directly extend these ideas to isotropic divisors.
Given an effective (thus nef) isotropic divisor $D\neq 0$ with $\deg(D)\leq 6d$, we 
claim that
\begin{eqnarray}
\label{eq:isotropic}
\label{eq:6d}
D.C=0 \;\;\; \forall \; C \in\Gamma.
\end{eqnarray}
To see this, consider the Gram matrix of 
$D$, $C$ and $H$. 
By the Hodge Index Theorem, its determinant is non-negative 
(cf. the proof of Lemma~\ref{lem:embed}), whereas 
\eqref{eq:02} and the main assumption of this section continue to hold. 
This implies directly that $D.C$ vanishes (for another argument, see  Example~\ref{example-7-4}).
 
 Similarly, one has
\begin{eqnarray}
\label{eq:<=2}
C.C' \leq 2 \;\;\; \forall \; C, C'\in\Gamma\;\;  \text{ with } C^2=C'^2=-2.
\end{eqnarray}

Recall that by a divisor $D$ of Kodaira type, we mean a nodal or cuspidal curve of arithmetic genus one
or a configuration of smooth rational curves (with multiplicities) which appears as a singular fibre of some elliptic surface.
Given a divisor $D=\sum_i n_i C_i$ of Kodaira type, one defines its weight 
 \[
 \mbox{wt}(D) := \sum_i n_i.
 \]
 In practice, wt$(\IK_n)=n$, wt$(\IK_n^*)=2n+6$, wt$(\IV^*)=12$, wt$(\III^*)=18$, wt$(\II^*)=30$. 
 Note that, if $D$ is supported on $\Gamma$, then 
 \begin{equation} \label{eq-weight-inequality}
 \deg(D)\leq \mbox{wt}(D)d \, .
 \end{equation}

The following result will prove very useful in the sequel:

\begin{lemm}
\label{lem:Kodaira_type}
If $\Gamma$ is not elliptic, then it supports a divisor of Kodaira type.
\end{lemm}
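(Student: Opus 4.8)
The plan is to argue that a non-elliptic graph $\Gamma$ must contain either an isotropic vertex or an extended Dynkin configuration, both of which are precisely the divisors of Kodaira type we seek. The two cases (parabolic and hyperbolic) are handled separately, but the parabolic case is essentially already done.

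First I would dispose of the parabolic case by quoting Lemma~\ref{lem:ortho}: if $M$ is parabolic, then it already decomposes orthogonally into Dynkin diagrams together with at least one isotropic vertex or extended Dynkin diagram, and the latter is exactly a divisor of Kodaira type (this is the content of the first paragraph of the proof of Lemma~\ref{lem:para}). So the real work is the hyperbolic case, and here I would exploit the standing hypothesis $h>42d^2$ of this section, which forces the strong numerical constraints \eqref{eq:02}, \eqref{eq:isotropic}, \eqref{eq:<=2}. If $\Gamma$ contains any isotropic vertex $C$ (i.e.\ $C^2=0$), we are immediately done: such a $C$ is a smooth, nodal or cuspidal rational curve of arithmetic genus one, hence of Kodaira type $\IK_1$, $\IK_0^*$-nodal or $\II$. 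So I may assume every vertex of $\Gamma$ satisfies $C^2=-2$ by \eqref{eq:02}, and then \eqref{eq:<=2} says $C.C'\in\{0,1,2\}$ throughout.

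The heart of the argument is then to locate an extended Dynkin subdiagram inside the hyperbolic root graph $\Gamma$. If two vertices $C,C'$ meet with $C.C'=2$, then $C+C'$ is an isotropic $\tilde A_1$-configuration and we are done. Otherwise $\Gamma$ is an ordinary graph (edge multiplicities in $\{0,1\}$) spanned by $(-2)$-vectors inside the hyperbolic lattice $\Pic(X)$; since $\Gamma$ is hyperbolic, the sublattice it spans is \emph{not} negative-definite, so it cannot be a disjoint union of ordinary Dynkin diagrams. By the classification of root systems, any connected graph of simply-laced type that is not of finite ($ADE$) type must contain a subgraph that is an affine ($\tilde A\tilde D\tilde E$) Dynkin diagram — concretely one can take a connected negative-semidefinite-but-not-definite subconfiguration of minimal size, which is necessarily one of the extended diagrams. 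That extended Dynkin subdiagram, read with the Kodaira multiplicities, is a divisor of Kodaira type.

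**The main obstacle** I anticipate is the step "hyperbolic simply-laced root graph contains an affine subdiagram." One has to be slightly careful: $\Gamma$ sits inside the \emph{hyperbolic} lattice $\Pic(X)$, not a priori inside a negative-definite one, so I cannot literally invoke the Dynkin classification for the whole of $\Gamma$ at once. The clean way is to grow a connected subconfiguration vertex by vertex, stopping at the first moment the Gram matrix fails to be negative-definite; by minimality it is negative-semidefinite of corank $1$, hence (by the classification of semidefinite root lattices) an extended Dynkin diagram, which furnishes the isotropic Kodaira-type divisor in its kernel. Alternatively, and perhaps more in the spirit of the paper, one invokes that a non-elliptic $\Gamma$ admits a parabolic subgraph $\Gamma_0$ (e.g.\ via the argument in the proof of Theorem~\ref{thm}~(ii)) and applies Lemma~\ref{lem:ortho} to $\Gamma_0$; since $\Gamma_0\subset\Gamma\subset\mathrm{Div}(X)$, the divisor of Kodaira type it supports is a divisor of Kodaira type in $\Gamma$. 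I would present the latter route, as it reuses machinery already in place and keeps the write-up short.
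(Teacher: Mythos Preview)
Your route is essentially the paper's: reduce to all vertices having square $-2$, dispatch the case $C.C'=2$ via an $\tilde A_1$ configuration, and then locate an affine subdiagram inside the resulting simply-laced graph. The paper phrases the last step as picking a \emph{minimal} non-elliptic subconfiguration $\Gamma_0\cup\{C_0\}$ (so $\Gamma_0$ is an orthogonal sum of $ADE$ diagrams) and checking by a brief case split how $C_0$ attaches---e.g.\ four disjoint neighbours give $\IK_0^*$.

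One point in your write-up needs repair. The ``grow vertex by vertex and stop at the first failure of negative-definiteness'' procedure can overshoot the semidefinite range. For instance, grow $D_5$ as the path $v_1\text{--}v_2\text{--}v_3\text{--}v_4$ with $v_5$ attached to $v_3$, and then add $C_0$ joined to $v_1$ and $v_5$: the resulting six-vertex graph contains the pentagon $\tilde A_4$ on $C_0,v_1,v_2,v_3,v_5$, but the pendant $v_4$ meets the isotropic class $C_0+v_1+v_2+v_3+v_5$ nontrivially, so the lattice is already hyperbolic, not semidefinite of corank one. What \emph{is} true---and what you state correctly one paragraph earlier---is that a connected simply-laced graph not of $ADE$ type contains an affine subdiagram; equivalently, a connected subgraph \emph{minimal under inclusion} among the non-elliptic ones is of $\tilde A\tilde D\tilde E$ type. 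That is exactly the content of the paper's ``easy case-by-case analysis'', so once you replace the sequential stopping time by inclusion-minimality your argument goes through. Your alternative via the proof of Theorem~\ref{thm}~(ii) does not bypass this either: that argument only supplies an elliptic-or-parabolic $\Gamma_0$ together with one further curve making the union hyperbolic, which again may skip the parabolic range and so does not hand you a parabolic subgraph to which Lemma~\ref{lem:ortho} could be applied.
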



\begin{proof}
If there is an isotropic $C\in\Gamma$, we're done, so we may assume 
that $C^2=-2$ for all $C\in\Gamma$ by \eqref{eq:02}.
Then the statement follows from the classification of Dynkin diagrams:
any simple graph that is not a Dynkin diagram contains an extended one.
Its fundamental cycle gives the claimed divisor of Kodaira type.
\end{proof}

\begin{rem}
The statement of \ref{lem:Kodaira_type} can also be verified directly in our situation.
To this end, pick a (minimal) $\Gamma_0\subset\Gamma$ which is elliptic,
together with a curve $C_0\in\Gamma$ such that $\Gamma_0\cup\{C_0\}$
is not elliptic anymore.
If there is $C\in\Gamma$ such that $C_0.C=2$, then $C_0+C$ has Kodaira type $\IK_2$ or $\III$.
Otherwise, $C_0.C\leq 1$ for all $C\in\Gamma$ by \eqref{eq:<=2}.
By what we have seen before, $\Gamma_0$ is an orthogonal sum of root lattices,
and an easy case-by-case analysis confirms the claim.
\end{rem}

The consequence for the hyperbolic case is immediate: 
\begin{cor}
\label{cor:hyperbolic-isotropic}
If $\Gamma$ is hyperbolic, then it supports a divisor $D$ of Kodaira type,
and any such divisor has degree $\deg(D)>6d$.
In particular, there are no divisors of Kodaira types $\IK_n \, (n\leq 6), \II, \III, \IV, \IK_0^*$
supported on $\Gamma$,
and all curves in $\Gamma$ are smooth rational.
\end{cor}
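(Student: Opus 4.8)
The plan is to prove Corollary \ref{cor:hyperbolic-isotropic} in two stages: first produce a divisor of Kodaira type supported on $\Gamma$, then rule out the low-degree (and hence low-Euler-number) Kodaira types, and finally deduce that every curve in $\Gamma$ is smooth rational. The existence of a divisor $D$ of Kodaira type is exactly Lemma \ref{lem:Kodaira_type}, since a hyperbolic $\Gamma$ is in particular not elliptic, so nothing new is needed there. The content is therefore entirely in the degree bound $\deg(D)>6d$ and its consequences.

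For the degree bound, I would argue as follows. A divisor $D$ of Kodaira type is isotropic, effective, and (after the obvious reduction) nef: its class is the fibre class of the genus one fibration $|D|$ it defines, and a fibre is nef. Now suppose $\deg(D)=D.H\leq 6d$. Then $D$ is a nonzero nef isotropic divisor of degree at most $6d$, so \eqref{eq:6d} applies and gives $D.C=0$ for every $C\in\Gamma$. But $D\in M=\ZZ\Gamma$ is a nonzero element orthogonal to all of $\Gamma$, hence $D\in\ker(M)$; this forces $M$ to be degenerate and contradicts the assumption that $\Gamma$ is hyperbolic (a hyperbolic lattice is nondegenerate by definition — the one-dimensional positive part plus a negative-definite complement). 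Hence $\deg(D)>6d$. There is one subtlety to address carefully: \eqref{eq:6d} was stated for a single nef isotropic $D$, but here $D$ is built from possibly several curves of $\Gamma$ (an extended Dynkin configuration with multiplicities), so I should note that the coefficients and the nef/isotropy properties are precisely those of a Kodaira fibre, which is what \eqref{eq:6d} is designed to handle.

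From $\deg(D)>6d$ the list of excluded Kodaira types is then a short bookkeeping exercise. Any divisor of Kodaira type supported on $\Gamma$ is a positive combination of curves each of degree at most $d$; writing down the multiplicities of the Kodaira fibre types, the types $\IK_n$ for $n\leq 6$ have total multiplicity (hence degree) at most $6d$, and likewise $\II$ (one cuspidal curve, degree $\leq d$), $\III$ (degree $\leq 2d$), $\IV$ (degree $\leq 3d$), and $\IK_0^*$ (multiplicities $1,1,1,1,2$, total $6$, degree $\leq 6d$) all violate $\deg(D)>6d$. So none of these occur on $\Gamma$. In particular $\Gamma$ contains no nodal or cuspidal curve (such a curve would by itself be a divisor of type $\IK_1$ or $\II$), and a rational curve on a K3 surface is either smooth or has such a singularity by the adjunction/arithmetic-genus count; therefore every curve in $\Gamma$ is smooth rational. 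I expect the only real care needed is in the second paragraph — making sure the nefness and isotropy hypotheses of \eqref{eq:6d} are legitimately met by a Kodaira-type divisor assembled from curves of $\Gamma$, and pinning down which multiplicity data make the degree $\leq 6d$ — while the rest is immediate from the results already proved.
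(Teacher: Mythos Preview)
Your overall approach matches the paper's, but one step does not go through as written. You argue that $D\in\ker(M)$ forces $M$ to be degenerate and that this contradicts ``$\Gamma$ hyperbolic''. In the paper's conventions, however, ``hyperbolic'' only requires that $M\otimes\RR$ contain a positive vector; $M$ itself may well be degenerate (this is precisely why Section~6 passes to $L=M/\ker(M)$). So $\ker(M)\neq 0$ alone is not a contradiction, and your parenthetical justification (``a hyperbolic lattice is nondegenerate by definition'') does not apply here.

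The paper closes this as follows: if $\deg(D)\leq 6d$, then \eqref{eq:6d} gives $D.C=0$ for every $C\in\Gamma$, so each $C$ is a fibre component of the genus one fibration $|D|$; since fibre components span a negative semi-definite lattice, $\Gamma$ is parabolic, contradicting the hypothesis. This is the missing geometric step --- you need to conclude that $M$ has no positive vector, not merely that it has a kernel. Once you insert this, your argument coincides with the paper's. A minor point on the last paragraph: the assertion ``a rational curve on a K3 surface is either smooth or nodal/cuspidal'' is not true in general; you should invoke \eqref{eq:02} to get $C^2\in\{0,-2\}$, hence $p_a(C)\in\{0,1\}$, before concluding that any non-smooth $C\in\Gamma$ is itself a divisor of type $\IK_1$ or $\II$.
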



\begin{proof}
The existence of $D$ follows from Lemma \ref{lem:Kodaira_type}.
But then $|D|$ induces a genus one fibration, 
and if $\deg(D)\leq 6d$, then any $C\in\Gamma$ is a fibre of this fibration by \eqref{eq:isotropic}.
Hence $\Gamma$ is parabolic, and we obtain a contradiction.

The statement about the Kodaira types follows from the degree bound \eqref{eq-weight-inequality}
in terms of the weight.
\end{proof}

\begin{rem}
\label{rem:hyper}
Corollary \ref{cor:hyperbolic-isotropic} implies that in the hyperbolic case,
one has $S_d=S_d'$, so
we do not have to limit ourselves to the restricted count in the exceptional
cases from Theorems \ref{thm2}, \ref{thm3}.
\end{rem}

Another  restriction on the possible hyperbolic graphs $\Gamma$
comes from considering hyperbolic subgraphs $\Gamma_0$ (cf.\ Remark \ref{rem:Gamma_0}).
Arguing with 
\[
L_0 = \ZZ\Gamma_0/\ker \hookrightarrow \Pic(X),
\]
one shows as in Proposition \ref{prop:h^2} (see \eqref{ineq-orthdecomp}) that the intrinsic polarisation $H_0\in L_0\otimes\QQ$
(if it exists) satisfies
\begin{eqnarray}
\label{eq:2h}
2h\leq H_0^2.
\end{eqnarray}

We illustrate the use of this bound by two examples, the second of which
will become important soon.

\begin{ex} \label{example-7-4}
If $0\neq D\in\Pic(X)$ is nef and isotropic, assume that $|D|$ admits some multisection $C\in\Gamma$.
Then applying the above argument to $\langle D,C\rangle$ exactly recovers the bound $\deg(D)>6d$ from \eqref{eq:6d}.
\end{ex}

Before coming to the second example, we introduce a general idea how to bound $H_0^2$ from above.
To this end, fix a basis of $L_0$ in $\Gamma$ with Gram matrix  $G$.
The intrinsic polarization 
\[
H_0=G^{-1}\vec d
\]
depends on the degree vector $\vec d$ (i.e.\ the coordinates $d_i$ of $\vec d$ 
are the degrees of the elements of the basis $L_0$),
and estimating $H_0^2$ may amount to a non-trivial optimization problem. However,
since the degrees are positive, there is a rough bound 
in terms of the entries $g_{ij}$ of $G^{-1}$ by
\begin{eqnarray} \label{eq-7-6}
\label{eq:H_0^2}
H_0^2 \leq \sum_{i,j} \max(0,g_{ij}) d^2.
\end{eqnarray}
\noindent
Indeed, we have $H_0^2 = H_0^{\transpose} GH_0 = {\vec d}^{\transpose} G^{-1} \vec d = \sum_{i,j} d_ig_{ij}d_j$, so \eqref{eq-7-6} follows from the inequalities
$\sum_{i,j} d_ig_{ij}d_j \leq \sum_{i,j}\max(0,g_{ij}) d_id_j \leq \sum_{i,j} \max(0,g_{ij})d^2.$

\begin{ex}
\label{ex:D_6}
Let $D$ be a divisor of Kodaira type $\IK_2^*$,
corresponding to an extended Dynkin diagram $\tilde D_6\subset\Gamma$.
Assume that 
there are 3 disjoint $(-2)$-curves of degree at most $d$ on $X$, 
serving as sections for the fibration induced by $|D|$,
and meeting different components of $D$.
The corresponding vertices in $\Gamma$ connect to different monovalent vertices of $\tilde D_6$.
This gives a rank $10$ hyperbolic lattice $L_0$.
For its Gram matrix $G$ one has $\sum_{i,j} \max(0,g_{ij})=78\frac 2{21}$, 
%
so \eqref{eq:2h} and \eqref{eq:H_0^2} contradict our assumption $h>42d^2$,
i.e.\ this configuration is impossible.
\end{ex}

%
%

\section{Proof for non-exceptional hyperbolic case}

We are now in the position to make our previous ideas effective.
To this end, in this section we make the following

\begin{assumption}
$\Gamma$ is hyperbolic with $\#\Gamma>24$.
\end{assumption}

Observe that for  characteristic $p \neq 2,3$  the first assumption follows from the 
second (by Corollary~\ref{cor:21} and \eqref{eq:<=24}).
Recall that by Corollary \ref{cor:hyperbolic-isotropic}, all curves in $\Gamma$ are smooth rational,
and as hinted in Remark \ref{rem:hyper}, we can treat all characteristics almost alike 
(see the next section for the few subtleties remaining).
Note also that 
\[
C.C'\leq 1 \;\;\; \forall C, C'\in\Gamma
\]
by \eqref{eq:<=2}, since the case $C.C'=2$ would lead to a divisor of Kodaira type $\IK_2$ or $\III$
of degree $\leq 2d$, contradicting Corollary \ref{cor:hyperbolic-isotropic}.

By Lemma \ref{lem:Kodaira_type},
$\Gamma$ 
supports a divisor $D$ of Kodaira type (with $\deg(D)>6d$ by Corollary \ref{cor:hyperbolic-isotropic}).
We proceed with two reduction steps.

\begin{lemm}[First reduction step]
\label{lem:red1}
{Given a divisor $D$ of Kodaira type supported on $\Gamma$,
assume that $|D|$ is not quasi-elliptic.
Then
there are at least 3 multisections of $|D|$ in $\Gamma$.}
\end{lemm}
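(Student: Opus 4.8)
\textbf{Proof plan for Lemma \ref{lem:red1}.}

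The plan is to argue by contradiction: suppose $\Gamma$ supports a divisor $D$ of Kodaira type with $|D|$ non-quasi-elliptic, but that $\Gamma$ contains at most two multisections of $|D|$. First I would invoke Corollary \ref{cor:hyperbolic-isotropic} and Remark \ref{rem:hyper} to recall that all curves in $\Gamma$ are smooth rational with $C^2 = -2$, that $C.C' \le 1$ for all $C \ne C' \in \Gamma$, and that $\deg(D) > 6d$. Since $|D|$ gives a genus one fibration $X \to \PP^1$ and is not quasi-elliptic, the generic fibre is smooth, so the Euler-number count \eqref{eq:e=24} applies; in particular the fibration has at most $24$ reducible-fibre components in total, and every $C \in \Gamma$ is either a fibre component of $|D|$ (contributing to the root part orthogonal to the general fibre class) or a multisection of some positive index $d' = F.C \ge 1$. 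The key point is that a curve $C \in \Gamma$ that is \emph{not} a fibre component of $|D|$ must be a multisection, because $C.D = d' \cdot (\text{mult of }D) \ge d' > 0$ forces $C$ to meet the fibre $D$.

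The heart of the argument is a counting bound: the fibre components of $|D|$ lying in $\Gamma$, together with the generic fibre class, span a parabolic sublattice whose rank is bounded by the Euler-characteristic constraint — concretely, the number of rational fibre components is at most $24$ (or the relevant smaller bound coming from \eqref{eq:comps} once we subtract the number of reducible fibres). So if $\Gamma$ had at most two multisections, then $\#\Gamma \le (\text{number of rational fibre components}) + 2 \le 24 + 2$; this is not yet a contradiction with $\#\Gamma > 24$, so I would need to sharpen it. The sharpening is exactly the intrinsic-polarization bound \eqref{eq:2h}: with only one or two multisections, the span $L_0 = \ZZ\Gamma_0/\ker$ (for a suitable hyperbolic $\Gamma_0 \subset \Gamma$ consisting of $D$, its fibre components in $\Gamma$, and the $\le 2$ multisections) has an intrinsic polarization $H_0$ whose square I can estimate via \eqref{eq:H_0^2}; because a single multisection (or two of bounded mutual intersection, since $C.C'\le 1$) "pins down" the polarization too tightly, one gets $H_0^2 < 42 d^2$, contradicting $2h \le H_0^2$ and $h > 42 d^2$. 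Alternatively — and this may be the cleaner route — with at most two multisections $C_1, C_2$ of indices $d_1', d_2'$, the fibration $|D|$ has no section unless $\gcd$-type conditions hold; in the genuinely multisection case the adjunction/Hodge-index bookkeeping on $\langle D, C_1, C_2\rangle$ plus the fibre components already forces $\#\Gamma \le 24$, again a contradiction.

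I expect the main obstacle to be the bookkeeping in the second approach: one must carefully distribute the $> 24$ vertices of $\Gamma$ between fibre components and multisections, use \eqref{eq:e=24} and \eqref{eq:comps} to bound the former, and then show that fewer than three multisections cannot make up the deficit without violating either the Euler-number budget or the degree bound $h > 42d^2$ via \eqref{eq:2h} and \eqref{eq:H_0^2}. A subtlety is that a multisection of index $d'$ may itself meet many fibre components (up to $d'$ times total, by \eqref{eq-bezout} refined on this fibration), so I must be careful that the multisections are not "double counted" against the fibre-component bound; keeping track of which $C \in \Gamma$ are fibre components versus horizontal is the delicate part, but it is forced once we fix $|D|$. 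Once three multisections are produced, the lemma follows.
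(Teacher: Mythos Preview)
Your proposal has a genuine gap: the intrinsic-polarization sharpening you rely on does not produce a contradiction. With a single multisection $C$ of index $d'$ and the isotropic class $D$, the hyperbolic rank-two lattice $\langle D,C\rangle/\ker$ has intrinsic polarization $H_0$ determined by $H_0.D=\deg(D)$ and $H_0.C=d_C$, giving
\[
H_0^2 \;=\; \frac{2\deg(D)}{d'}\Bigl(d_C+\frac{\deg(D)}{d'}\Bigr).
\]
Since Corollary \ref{cor:hyperbolic-isotropic} only gives $\deg(D)>6d$ (and $\deg(D)$ may be as large as $\sim 24d$), for $d'=1$ this yields $H_0^2$ of order several hundred $d^2$, far above $84d^2$. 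Adding a second multisection or throwing in further fibre components only \emph{increases} the upper bound in \eqref{eq:H_0^2}. So \eqref{eq:2h} is perfectly compatible with $h>42d^2$ here, and your ``pins down the polarization too tightly'' heuristic goes the wrong way. Your alternative route is not specified enough to evaluate, but the naive count $\#\Gamma\le 24+2$ that you yourself flag as insufficient is essentially where it stops.

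The paper's argument is structurally different and uses two ingredients you do not mention. First, the root lattice $L_0$ spanned by fibre components inside $\Pic(X)$ has hyperbolic orthogonal complement, hence $\rank L_0\le 20$; combined with $\ge 23$ fibre components in $\Gamma$, this forces \emph{exactly three} singular fibres to be completely supported on $\Gamma$ (four or more would also yield an $\IK_0^*$ via a multisection). Second, one of these three fibres then has degree $\le 12d$, and the analogue of \eqref{eq:6d} forces every horizontal curve in $\Gamma$ to be a genuine \emph{section}; thus the fibration is jacobian and extremal with finite Mordell--Weil group. The contradiction then comes from Ito's classification of extremal elliptic K3 surfaces: the only admissible three-fibre configuration is $\IK_7,\IK_7,\II^*$ with trivial $\MW$, giving $\#\Gamma\le 24$. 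The classification input is essential and cannot be replaced by the polarization estimate you propose.
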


\begin{proof}
Assume to the contrary that $\Gamma$ contains at most 2 multisections for $|D|$. 
In consequence, $\Gamma$ contains at least 23 fibre components.
Regardless of there being sections or not,
any orthogonal sum of root lattices $L_0$ embedding into the fibres
embeds into $\Pic(X)$ with orthogonal complement hyperbolic indefinite.
(This is just like in the jacobian case, where the orthogonal complement contains the hyperbolic plane $U$.)
In particular, 
\begin{eqnarray}
\label{eq:L_0}
\rank L_0 \leq 20.
\end{eqnarray}
In case there are at most two  singular fibres completely supported on $\Gamma$
(i.e.\ $\Gamma$ contains the corresponding extended Dynkin diagrams),
omitting a single component of each of these yields $L_0$ of rank $21$,
contradicting \eqref{eq:L_0}.
Thus $\Gamma$ has to support at least 3 singular fibres completely.
If there were four or more of them, then connecting  any multisection in $\Gamma$
with one fibre component in each fibre would produce 
a divisor of Kodaira type $\IK_0^*$ supported on $\Gamma$,
contradicting Corollary \ref{cor:hyperbolic-isotropic}. 
(This case can also be ruled out by considering the Euler--Poincar\'e characteristic.)
Hence there are exactly three  singular fibres completely supported on $\Gamma$,
and we get $\rank L_0=20$.
Comparing Euler--Poincar\'e characteristic and degree,
one of the divisors $D$ of these fibres has $\deg(D)\leq 12d$.
The analogue of \eqref{eq:6d} then shows that
\begin{eqnarray}
\label{eq:12d}
D.C \leq 1\;\;\; \forall \, C\in\Gamma.
\end{eqnarray}
That is, all curves in $\Gamma$ are either fibre components or sections of the fibration induced by $|D|$.
In particular, the fibration is jacobian, and by the Shioda--Tate formula, the rank of $L_0$
implies that $\MW(|D|)$ is finite.
These (quasi-)elliptic surfaces are called extremal, 
and they are very rare. 
Indeed, the classification of extremal elliptic K3 surfaces by Ito in \cite{Ito1}, \cite{Ito2}  reveals
that the only possibility with three singular fibres allowed by Corollary \ref{cor:hyperbolic-isotropic} has
configuration $\IK_7, \IK_7, \II^*$ and $\MW=\{O\}$, in characteristic $p=7$ only, so $\#\Gamma=7+7+9+1=
24$,
contradiction.
\end{proof}

\begin{rem}
\label{rem:quasi-ell}
In the quasi-elliptic case, 
there are a few further configurations with 3 singular fibres supported on $\Gamma$
(denoted by extended Dynkin diagrams)
and two sections only (see e.g. \cite[Table QE]{Shimada}):
\begin{enumerate}
\item
\label{item1}
$p=3$, 
configuration
$3\tilde E_6+A_2$
with two sections from $\MW(|D|) \cong \ZZ/3\ZZ$;


\item
\label{item2}
$p=2$, 
configuration $3\tilde D_6+2A_1$ with $\MW(|D|) \supset \ZZ/2\ZZ$;

%

\item
\label{item3}
$p=2$, 
configuration $2\tilde E_7+\tilde D_6$ with $\MW(|D|)=\ZZ/2\ZZ$.
\end{enumerate}
There are two other configurations which are a priori possible by \cite[Table QE]{Shimada},
$2\tilde E_6+\tilde E_8$ and $3\tilde E_6+A_2$,
but both have $\MW=\{O\}$,
so $\#\Gamma\leq 24$.
\end{rem}

In the proof of  the second reduction step below,
the notion of weight of a divisor of Kodaira type and the inequality  \eqref{eq-weight-inequality}
again play an important role.


\begin{lemm}[Second Reduction step]
\label{lem:red2}
$\Gamma$ supports a divisor of Kodaira type $\IK_1^*, \IK_2^*$ or $\IV^*$.
\end{lemm}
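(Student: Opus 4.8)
The plan is to combine the two reduction steps already proved with a careful bookkeeping of the possible configurations of fibre components and multisections on $\Gamma$. By Lemma \ref{lem:red1} (applied to the divisor $D$ of Kodaira type furnished by Lemma \ref{lem:Kodaira_type}), outside the quasi-elliptic case we may assume there are at least $3$ multisections of $|D|$ in $\Gamma$, and by the quasi-elliptic discussion in Remark \ref{rem:quasi-ell} the only quasi-elliptic configurations surviving already involve extended Dynkin diagrams $\tilde E_6,\tilde E_7,\tilde D_6$ — whose underlying curves supply divisors of type $\IV^*$, $\III^*$, $\IK_2^*$ respectively — so in that case the conclusion is immediate. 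Hence I would reduce at once to the situation where $|D|$ is a genuine genus one fibration with at least three multisections in $\Gamma$, and $\deg(D)>6d$ by Corollary \ref{cor:hyperbolic-isotropic}.

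Next I would argue by contradiction: suppose $\Gamma$ supports \emph{no} divisor of Kodaira type $\IK_1^*,\IK_2^*$ or $\IV^*$. Combined with Corollary \ref{cor:hyperbolic-isotropic} (which already excludes $\IK_n$ for $n\le 6$, as well as $\II,\III,\IV,\IK_0^*$), the only Kodaira types whose \emph{support} can sit inside $\Gamma$ are $\IK_n$ with $n\ge 7$, $\IK_m^*$ with $m\ge 3$, $\III^*$ and $\II^*$ — all of which have weight (and hence, via $\deg(D)\le\mathrm{wt}(D)\,d$, degree bound) at least $7d$, but more importantly all of which force large lattice rank: $\rank=n\ge 7$, $\rank=m+5\ge 8$, $\rank=8$, $\rank=9$. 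I would then feed every configuration of such fibres (together with the extra multisections) into the rank bound $\rank L_0\le 20$ from \eqref{eq:L_0}/Shioda--Tate, and into the intersection-degree constraint \eqref{eq:2h}, \eqref{eq:H_0^2} refined exactly as in Example \ref{ex:D_6}: any $(-2)$-curve of $\Gamma$ attached to an exterior vertex of a $\IK_m^*$, $\III^*$ or $\II^*$ fibre produces a hyperbolic sublattice $L_0$ whose intrinsic polarisation has $H_0^2\le\sum_{i,j}\max(0,g_{ij})\,d^2$, and for all these high-weight types the constant $\sum_{i,j}\max(0,g_{ij})$ turns out to be $<42$, contradicting $h>42d^2$. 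The upshot is that no curve of $\Gamma$ outside the fibre $D$ itself may meet such a fibre, i.e. $D.C\le 0$ for all $C\in\Gamma$ not in $D$, forcing the multisections of $|D|$ to vanish — contradicting the three multisections provided by the first reduction step (or, in the jacobian subcase, contradicting the extremal-surface classification already used in Lemma \ref{lem:red1}). Since $\Gamma$ must contain some Kodaira-type divisor and we have excluded every type except $\IK_1^*,\IK_2^*,\IV^*$, the lemma follows.

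The organisational heart of the argument, and the step I expect to be the main obstacle, is the finite-but-bushy case analysis of which high-weight fibre $D$ can coexist with $\ge 3$ multisections and $\ge 25$ total curves while keeping $\rank L_0\le 20$: one has to rule out, type by type, every way a multisection could attach to an exterior vertex of a $\IK_m^*$ ($m\ge 3$), $\III^*$ or $\II^*$ configuration. The technical core of each sub-case is the Gram-matrix computation behind \eqref{eq:H_0^2} — showing that the relevant constant $\sum_{i,j}\max(0,g_{ij})$ stays below $42$ — which is routine linear algebra but must be carried out for each of the handful of maximal configurations (this is precisely the kind of computation Example \ref{ex:D_6} illustrates for $\IK_2^*$ with three attached $(-2)$-curves). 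A secondary subtlety is that in characteristic $2$ or $3$ one must keep the quasi-elliptic possibilities of Remark \ref{rem:quasi-ell} on the table throughout; but as noted, each of those already contains $\tilde E_6$, $\tilde E_7$ or $\tilde D_6$, hence already a divisor of the desired type $\IV^*$, $\III^*\ (\supset\IK_2^*)$ or $\IK_2^*$, so they cause no extra trouble once isolated at the start.
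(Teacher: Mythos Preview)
Your approach diverges from the paper's and contains a genuine gap. The core claim---that attaching a single $(-2)$-curve to an exterior vertex of a fibre of type $\IK_m^*$ ($m\ge 3$), $\III^*$ or $\II^*$ yields a hyperbolic sublattice whose constant $\sum_{i,j}\max(0,g_{ij})$ falls below the threshold (you wrote $42$; the relevant bound from \eqref{eq:2h} is $84$)---is false. A Kodaira fibre $D$ together with one section $C$ generates a lattice isomorphic to $U\oplus R$ for some root lattice $R$; the $U$-summand alone already contributes $2\deg(D)\bigl(\deg(C)+\deg(D)\bigr)$ to $H_0^2$, and since $\deg(D)$ can be as large as $\mathrm{wt}(D)\cdot d$ (e.g.\ $30d$ for $\II^*$, $12d$ for $\IK_3^*$), this exceeds $84d^2$ by a wide margin. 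So no contradiction with $h>42d^2$ arises from a single attached curve. You also omit the case $\IK_n$ ($n\ge 7$) from the Gram-matrix discussion entirely, and the parenthetical ``$\III^*\supset\IK_2^*$'' is incorrect: the diagram $\tilde E_7$ contains no $\tilde D_6$ subgraph (though this does not matter for Remark~\ref{rem:quasi-ell}\,(3), which has a genuine $\tilde D_6$ fibre anyway).

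The paper's argument is not a Gram-matrix estimate but a \emph{weight descent}. Starting from any Kodaira divisor $D$ supported on $\Gamma$ and not of the three target types, one uses the three multisections supplied by Lemma~\ref{lem:red1} combinatorially: either some multisection meets a multiple component of $D$ or two distinct components, producing a new Kodaira divisor on $\Gamma$ of strictly smaller weight (or already of a target type), or all three are honest sections. In the latter case a short case-by-case analysis shows that three sections attached to $\II^*$ force $\IK_3$ or $\IK_0^*$ (excluded); to $\III^*$ yield $\IK_3^*$ of weight $12<18$; to $\IK_n^*$ ($n>2$) yield $\IK_1^*$ or $\IV^*$; and to $\IK_n$ ($n>6$) yield a shorter cycle or some $\IK_m^*$. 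Iterating the descent, one eventually lands on $\IK_1^*$, $\IK_2^*$ or $\IV^*$, the remaining low-weight types being excluded by Corollary~\ref{cor:hyperbolic-isotropic}. Gram-matrix computations in the style of Example~\ref{ex:D_6} enter only \emph{after} this lemma, in \S\ref{ss:pf}, to eliminate specific configurations built on these three types---precisely because those configurations are small and rigid enough for the constants to drop below $84$.
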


\begin{proof}
By Corollary \ref{cor:hyperbolic-isotropic}, $\Gamma$ supports a divisor $D$ of Kodaira type.
Assume that $D$ does not have Kodaira type $\IK_1^*, \IK_2^*$ or $\IV^*$.
This means that the exceptional configurations from Remark \ref{rem:quasi-ell} cannot occur,
so by Lemma \ref{lem:red1}, the fibration induced by $|D|$ has at least 3 multisections in $\Gamma$.
If one of them were not a section, 
 then either it would meet two irreducible components of $D$,
thus giving a cycle of weight less than $\mbox{wt}(D)$, or it would meet a multiple component of $D$.
For $\IK_n^* \, (n>2)$, this results in a divisor of type $\IK_m^*$ with $m<n$,
while $\II^*$ may also give $\III^*$ or $\IV^*$, and $\III^*$ may also give $\IV^*$.
In any case, the weight drops or we get one of the stated types.
So we may assume that all non-fibre components in $\Gamma$ are sections
of the fibration induced by $|D|$
and proceed with a case-by-case analysis.

{If $D$ has type $\II^*$}, 
then the 3 sections meet one and the same (simple) component,
and we would get $\IK_3$ (if two of them meet) or $\IK_0^*$,
both of which are excluded by Corollary \ref{cor:hyperbolic-isotropic}.

In what follows we will often suppress those cases ruled out 
in the same fashion.

{If $D$ has type $\III^*$}, the 3 sections lead to cases as above
or we get $\IK_3^*$ of weight $12<18=\mbox{wt}(\III^*)$.

{If $D$ has type $\IK_n^*\, (n>2)$},
we get  $\IK_1^*$ or $\IV^*$
(since two of the sections meet simple fibre components which connect through a single fibre component,
and they are disjoint by Corollary \ref{cor:hyperbolic-isotropic}).

{If $D$ has type $\IK_n \, (n>6)$},
we either get a cycle
of length at most $\frac n3+3<n$ or a divisor of type $\IK_m^*$ 
(the precise value $m\leq \frac n3$ does not matter)
to which we then apply the previous step.
\end{proof}



\subsection{Proof of Theorems \ref{thm} (i), \ref{thm2} (i) and \ref{thm3} (i)}
\label{ss:pf}

We continue to assume $h>42d^2$.
By Lemma \ref{lem:red2},
it  remains to rule out  configurations with $\#\Gamma>24$ involving a divisor $D$ of Kodaira type
$\IK_1^*, \IK_2^*$ or $\IV^*$ inducing an elliptic fibration
 -- like we already did for
a special configuration in Example \ref{ex:D_6}.

If $D$ has type $\IK_1^*$, then the 3 sections either lead to a cycle of length at most 6 (which is impossible
by Corollary \ref{cor:hyperbolic-isotropic}),
or the sections are disjoint, and together they support a divisor $D'$ of type $\IK_2^*$ or $\IV^*$,
but with one component of $\IK_1^*$
now forming a bisection of the fibration induced by $|D'|$,
contradicting \eqref{eq:12d}.


If $D$ has type   $\IK_2^*$, we distinguish whether the 3 sections are pairwise disjoint.
In the disjoint case, two of the sections have to meet the same component
(since otherwise Example \ref{ex:D_6} gives a contradiction),
so $\Gamma$ supports $\IK_1^*$ as well, and the previous case gives a contradiction.
If the sections are not all disjoint, then $\IK_2^*$ extended by two sections which meet
supports a divisor $D'$ of type $\III^*$
while the remaining fibre component of $\IK_2^*$ serves as a trisection of
the fibration induced by $|D'|$.
Since $\deg(D')\leq 18d$, this contradicts the analogue of \eqref{eq:6d}, \eqref{eq:12d}.


If $D$ has type  $\IV^*$, 
either  some of the sections meet, giving a configuration of type $\IK_3$ which we ruled out before, 
or $\IK_7$ which, with 3 sections attached, leads on to $\IK_1^*$ or $\IK_2^*$,
or the sections are disjoint. 
The latter case leads to $\IK_2^*$ (so that the previous considerations give a contradiction) or 
to a single central vertex extended by 3 disjoint $A_3$ configurations.

For this last case,  we analyse more closely the possible configurations in $\Gamma$.
Namely, if there are more than 3 sections, then we are automatically in the cases with $\IK_7$ or $\IK_2^*$ above,
so we may assume that $\Gamma$ contains exactly three sections of
the fibration induced by $|D|$.
Hence there are at least 22 fibre components in $\Gamma$.

If there are less than 3 fibres supported completely on $\Gamma$,
then we  again obtain an extremal fibration
which only leads to exceptional cases (see Section \ref{ss:adjust}).

Assume that there are 3 fibres completely supported on $\Gamma$, one of them being $D$.
Here any elliptic configuration has an $\IK_n$ fibre supported on $\Gamma$
 (with $n>6$ by Corollary \ref{cor:hyperbolic-isotropic}),
since else the Euler--Poincar\'e characteristic reveals 
that there can be at most 21 fibre components in $\Gamma$.
But then any given section connects with $\IK_n$ and the other two fibres supported on $\Gamma$ 
to give a divisor of type $\IK_1^*$ as treated before. 

In summary, if $h>42d^2$ and we are outside the exceptional cases, then 
no K3 surface of degree $2h$ admits
a configuration $\Gamma$ of more than 24 rational curves of degree at most $d$ 
such that $\Gamma$ is hyperbolic. 
Thus, by Corollary~\ref{cor:21} and \eqref{eq:<=24}, we obtain
$$
S_d \leq 24
$$
which completes the proof 
of  Theorems \ref{thm} (i), \ref{thm2} (i) and \ref{thm3} (i).
\qed

%
%

\section{Proofs of  Theorems \ref{thm2}  and \ref{thm3}}
\label{s:exc}

\label{ss:adjust}

To complete the proofs of  Theorems \ref{thm2} and \ref{thm3},
it remains to cover the exceptional cases in characteristics $2$ and $3$.
In the argument from \ref{ss:pf}, there are the following exceptional cases occurring
(continuing the numbering from Remark \ref{rem:quasi-ell}).
First with less than 3 fibres completely supported on $\Gamma$:
\begin{enumerate}
\item[(4)]
\label{item4}
$p=2$, elliptic fibration with configuration $\tilde A_{11}+\tilde E_6+A_3$
and $\MW=\ZZ/3\ZZ$ by \cite{Ito1}, \cite{Ito2};
\item[(5)]
\label{item5}
$p=3$, quasi-elliptic fibration with configuration
  $2\tilde E_6+E_6+A_2$ and $\MW=\ZZ/3\ZZ$.
\end{enumerate}
The only other a priori possible configuration   $2\tilde E_6+4A_2$
($p=3$, quasi-elliptic) is ruled out as follows.
The three sections are 3-torsion (as enforced by quasi-elliptic fibrations).
Hence, 
for the height pairing from \cite{MWL} to evaluate as zero, any two of them have to meet exactly three out of 
the four $\IV$ fibres in different components. In particular, this implies that some section
meets two of the $A_2$ summands. But then it connects with these two and with the two $\IV^*$ fibres
to a divisor of type $\IK_0^*$, so we obtain a contradiction (see Corollary \ref{cor:hyperbolic-isotropic}).

If there are 3 fibres completely supported on $\Gamma$, one of them being $D$,
then the quasi-elliptic case only allows for 
\begin{enumerate}
\item[(6)]
\label{item6}
$p=3$, configurations $3\tilde E_6+A_1$ or $3\tilde E_6+A_2$ 
as in \eqref{item1}, but now with all three sections contained in $\Gamma$.
\end{enumerate}

\subsection{Degree bound $h>43d^2$ in characteristic $3$}

One easily verifies that each exceptional case in characteristic $p=3$
(i.e.  \eqref{item1}, (5), (6))
features a divisor of Kodaira type $\IK_3^*$ with 4 disjoint sections,
one meeting each simple fibre component
(the monovalent vertices in the corresponding extended Dynkin diagram).
Let $G$ denote the Gram matrix.

\begin{lemm}
In the box $[0,d]^{12}$, the product $\vec x G^{-1}\vec x^\top$ is
maximized
by $\vec{x}_{max}=(d,\hdots,d)$.
\end{lemm}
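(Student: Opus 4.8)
The goal is to show that the quadratic form $\vec{x}\mapsto \vec{x}G^{-1}\vec{x}^\top$ attains its maximum over the cube $[0,d]^{12}$ at the extreme corner $\vec{x}_{max}=(d,\dots,d)$. The natural starting point is to write $\vec x G^{-1}\vec x^\top = \sum_{i,j}g_{ij}x_ix_j$ where $g_{ij}$ are the entries of $G^{-1}$, and to compare the value at an arbitrary $\vec x\in[0,d]^{12}$ with the value at $\vec x_{max}$. First I would exploit convexity: since the Gram matrix $G$ of a Kodaira-type divisor on a K3 surface defines a negative semi-definite (here, after passing to $L_0$, hyperbolic) lattice, the inverse $G^{-1}$ need not be sign-definite, but the restriction of the form to the relevant region is governed by the explicit numerology of $\IK_3^*$ with four attached sections. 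The cleanest route is to observe that $\vec{x}\mapsto \vec{x}G^{-1}\vec{x}^\top$ is a quadratic polynomial whose behaviour on the box is controlled by its gradient $2G^{-1}\vec x$ and by the signs of the off-diagonal entries.

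The key computational step is to determine $G^{-1}$ explicitly (or at least the signs and a few aggregate quantities like row sums). For the configuration at hand — a type $\IK_3^*$ fibre with one disjoint $(-2)$-curve attached to each of its four exterior (simple) components — the lattice $L_0$ has rank $12$ and a very rigid shape, so $G^{-1}$ can be computed once and for all. I would then check that $G^{-1}\vec{x}_{max} = d\,G^{-1}(1,\dots,1)^\top$ has all entries $\geq 0$, which forces the linear functional $\vec{y}\mapsto \vec{x}_{max}G^{-1}\vec{y}^\top$ to be non-decreasing in each coordinate $y_k$ on $[0,d]^{12}$; combined with the identity
\[
\vec{x}_{max}G^{-1}\vec{x}_{max}^\top - \vec{x}G^{-1}\vec{x}^\top
= (\vec{x}_{max}-\vec{x})G^{-1}(\vec{x}_{max}+\vec{x})^\top + \text{(correction terms)},
\]
one reduces to checking non-negativity of a manifestly non-negative expression. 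Concretely, writing $\vec{x}=\vec{x}_{max}-\vec{t}$ with $\vec{t}\in[0,d]^{12}$, we get $\vec{x}G^{-1}\vec{x}^\top = \vec{x}_{max}G^{-1}\vec{x}_{max}^\top - 2\vec{t}G^{-1}\vec{x}_{max}^\top + \vec{t}G^{-1}\vec{t}^\top$, so it suffices to show $2\vec{t}G^{-1}\vec{x}_{max}^\top \geq \vec{t}G^{-1}\vec{t}^\top$ for all $\vec{t}\in[0,d]^{12}$; since $G^{-1}\vec{x}_{max}^\top$ has non-negative entries and is componentwise $\geq \tfrac12 G^{-1}\vec{t}^\top$ (because $\vec{x}_{max}\geq\vec t$ and the positive part of $G^{-1}$ dominates), this follows.

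The step I expect to be the main obstacle is the explicit inversion of $G$ and the verification that $G^{-1}(1,\dots,1)^\top$ is entrywise non-negative while simultaneously dominating $\tfrac12 G^{-1}\vec t^\top$ for all $\vec t$ in the box — this is where the specific arithmetic of the $\IK_3^*$ configuration enters and where a naive sign analysis could fail, since $G^{-1}$ certainly has negative off-diagonal entries (it is the inverse of an indefinite form). The resolution is that the negative entries of $G^{-1}$ are confined to controlled positions (roughly, between the attached sections and the far interior components of the $\IK_3^*$ chain), and their contribution is outweighed by the diagonal and the positive coupling to the central and exterior vertices; making this precise amounts to the bookkeeping already implicit in the computation $\sum_{i,j}\max(0,g_{ij}) = 78\tfrac{2}{21}$ used in Example~\ref{ex:D_6}, here upgraded from a crude bound on $H_0^2$ to the sharper statement that the unconstrained optimum over the box actually sits at the corner. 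Once $G^{-1}$ is in hand, the remaining inequalities are routine and the lemma follows; it then feeds directly into the degree bound $h>43d^2$ by combining \eqref{eq:2h} with $H_0^2 = \vec{x}_{max}G^{-1}\vec{x}_{max}^\top$.
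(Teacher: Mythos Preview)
Your reduction to the inequality $2\vec t\,G^{-1}\vec x_{max}^\top \geq \vec t\,G^{-1}\vec t^\top$ for $\vec t\in[0,d]^{12}$ is correct, but the step you flag as the obstacle really is one, and your proposed resolution does not close it. The claim that $G^{-1}\vec x_{max}^\top \geq \tfrac12 G^{-1}\vec t^\top$ componentwise ``because $\vec x_{max}\geq\vec t$ and the positive part of $G^{-1}$ dominates'' is not an argument: since $G^{-1}$ has negative off-diagonal entries, $\vec x_{max}\geq\vec t$ does \emph{not} imply $G^{-1}\vec x_{max}^\top\geq G^{-1}\vec t^\top$, and ``dominates'' is left undefined. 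You would need a precise structural statement about where the negative entries of $G^{-1}$ sit and how they cancel. (Also, the number $78\tfrac{2}{21}$ you cite is for the rank-$10$ configuration of $\IK_2^*$ with three sections in Example~\ref{ex:D_6}, not the rank-$12$ configuration of $\IK_3^*$ with four sections treated here; the relevant entry sum is $86$.)

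The paper supplies exactly the missing structural statement, and uses it more directly than your substitution would. One computes that the negative entries of $G^{-1}$ occur in four $2\times 2$ blocks of the shape $A=\tfrac13\left(\begin{smallmatrix}-2&-1\\-1&-2\end{smallmatrix}\right)$, and one writes $G^{-1}=G_0+G_+$ where $G_0$ is a $12\times 12$ block matrix built from $\pm A$ (negative semidefinite, with the all-ones vector in its kernel) and $G_+$ has all entries non-negative. Then $\vec x\,G_0\,\vec x^\top\leq 0=\vec x_{max}\,G_0\,\vec x_{max}^\top$ and $\vec x\,G_+\,\vec x^\top\leq \vec x_{max}\,G_+\,\vec x_{max}^\top$ trivially, and the lemma follows in one line. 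Your approach can be salvaged by plugging this same decomposition into your inequality (use $G_0\vec x_{max}^\top=0$ and the negative-semidefiniteness of $G_0$), but at that point the substitution $\vec x=\vec x_{max}-\vec t$ is an unnecessary detour.
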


\begin{proof}
The inverse $G^{-1}$ of the Gram matrix $G$ has only a few negative entries,
occurring in $2\times 2$ blocks of the shape $A=\frac 13\begin{pmatrix}-2 & -1\\-1 & -2\end{pmatrix}$.
We define the auxiliary matrix 
\[
G_0 = 
\begin{pmatrix} 
A & 0 & 0 & 0 & -A & 0\\
0 & A & 0 & 0 & 0 & -A\\
0 & 0 & 0 & 0 & 0 & 0\\
0 & 0 & 0 & 0 & 0 & 0\\
-A & 0 & 0 & 0 & A & 0\\
0 & -A & 0 & 0 & 0 & A
\end{pmatrix}
\]
where each entry stands for a $2\times 2$ block.
This results in the decomposition
\[
G^{-1} = G_0 + G_+
\]
where all entries of $G_+$ are non-negative.
Moreover $G_0$ is negative-semidefinite with $\vec x_{max}$ in its kernel,
so $\vec x_{max}$ maximizes $\vec x G_0\vec x^\top$.
Obviously, it also optimizes $\vec x G_+\vec x^\top$,
and the claim follows.
\end{proof}

\subsection*{Proof of Theorem \ref{thm3} (ii)}

Let $\Gamma_0\subset\Gamma$ be given by the 12 smooth rational curves in the above configuration (i.e. a divisor of Kodaira type $\IK_3^*$ with 4 disjoint sections,
one meeting each simple fibre component).
We estimate the square of the intrinsic polarization $H_0$.
Since $G_0$ has zero sum of entries, $G^{-1}$ and $G_+$ have the same sum of entries $86$.
Arguing as in \eqref{eq:H_0^2}, we find $H_0^2\leq 86d^2$,
so this configuration is excluded as soon as $h>43d^2$.
\qed

%
%
%
%
%
%
%
%
%
%
%
%
%
%
%

\subsection{Degree bound $h>46.25d^2$ in characteristic $2$}

Each exceptional case in characteristic $p=2$
(i.e.  \eqref{item2}, \eqref{item3}, (4))
features a divisor of Kodaira type $\IV^*$ extended by three disjoint $A_2$ configurations
(or a single central vertex extended by 3 disjoint $A_4$ configurations).
The inverse of the Gram matrix  has few negative entries and sum of entries $92.5$.
The proof of Theorem \ref{thm2} (ii) is similar to the above;
the details are left to the reader.

\section{K3 surfaces with 24 rational curves}
\label{s:ex}

In this section, we prove Theorem \ref{thm} (iii) fixing $d\geq 3$.
We need the following auxiliary result.

\begin{lemm}
\label{lem:rk2}
Assume that $\operatorname{char}(k)\neq 2$
and let $c\in2\ZZ$.
Then there is a family of K3 surfaces over $k$ with generic Picard lattice
\[
\Pic = \begin{pmatrix}
0 & d \\
d & c
\end{pmatrix}.
\]
\end{lemm}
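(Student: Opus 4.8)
The plan is to realize the given rank-2 lattice as the Picard lattice of a K3 surface via the surjectivity of the period map, then deform to obtain a family. First I would observe that the matrix
\[
N = \begin{pmatrix} 0 & d \\ d & c \end{pmatrix}
\]
has determinant $-d^2 < 0$, so $N$ is an even hyperbolic lattice of signature $(1,1)$; the hypothesis $c \in 2\ZZ$ is exactly what makes it even. The standard route is then to invoke a lattice-theoretic existence result: an even lattice of signature $(1,\rho-1)$ with $\rho \leq 20$ (here $\rho = 2$) embeds primitively into the K3 lattice $\Lambda = U^{\oplus 3} \oplus E_8(-1)^{\oplus 2}$, and the surjectivity of the period map (Todorov, Morrison) combined with the existence of ample classes yields a complex projective K3 surface $X$ with $\Pic(X) \cong N$ and Néron--Severi embedding the prescribed one. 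To get a \emph{family} rather than a single surface, one notes that the moduli space of $N$-polarized K3 surfaces has dimension $20 - \rho = 18$, giving the desired family whose \emph{generic} member has Picard lattice exactly $N$ (the very general point has Picard rank exactly $2$ by the Noether--Lefschetz type argument).

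Next I would address the characteristic hypothesis $\mathrm{char}(k) \neq 2$. The construction above produces a family over $\CC$; to descend to (or spread out over) an arbitrary field $k$ of characteristic $\neq 2$, I would argue that the relevant moduli space (or an explicit model of the family, e.g.\ as an elliptic fibration — note $N$ contains the isotropic class $e$ with $e^2 = 0$, $e \cdot f = d$ for the second generator, so $|e|$ induces a genus-one fibration with a multisection of index $d$) is defined over $\ZZ[1/2]$, and hence base-changes to $k$. Concretely, the isotropic vector makes $X$ an elliptic surface (after possibly adjusting the basis so that the fibre class is nef), and one can write down a Weierstrass-type family with the appropriate Mordell--Weil and torsion data over $\ZZ[1/2]$; reduction modulo $p \neq 2$ keeps the surface a K3 and keeps the sublattice $N$ inside $\Pic$. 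One must check that for a very general member the Picard lattice does not jump — this follows because the family has the expected dimension and specialization can only increase $\rho$, while a suitable specialization (or a known example) already has $\rho = 2$.

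The main obstacle I anticipate is precisely this last point in positive characteristic: guaranteeing that the \emph{generic} geometric fibre of the constructed family has Picard lattice \emph{equal to} $N$, not strictly larger. Over $\CC$ this is the classical countability-of-Noether--Lefschetz-loci argument, but over an arbitrary field $k$ one cannot appeal to Hodge theory directly. The cleanest fix is to work with a versal deformation: take one K3 surface $X_0/k$ with $\Pic(X_0) \supseteq N$ (obtained by reduction of a complex example, or by an explicit elliptic-fibration construction), and consider the sublocus of its (formal/algebraic) deformation space where the sublattice $N$ stays algebraic; this is a closed subspace of codimension $\leq 20 - \rank(N) = 18$, and over a dense open subset of it the Picard lattice is exactly $N$ because the locus where it is strictly larger is a countable union of proper closed subsets (the Noether--Lefschetz loci for the overlattices, which are still finitely many up to isometry in each rank). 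In practice, since the paper only needs \emph{some} family with \emph{generic} Picard lattice $N$ — to then apply Criterion \ref{crit} and build the 24 rational curves in Section \ref{s:ex} — it suffices to cite the standard references for this (Morrison's work on $N$-polarized K3 surfaces, and the positive-characteristic analogues), and to note that the explicit elliptic model makes the family visibly defined over $\ZZ[1/2]$.
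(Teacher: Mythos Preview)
Your approach over $\CC$ is fine, and in fact the paper says so explicitly in the remark following the proof: Nikulin's embedding results plus surjectivity of the period map give the lemma immediately in characteristic zero. The interesting content of the lemma is the positive-characteristic case, and there the paper takes a genuinely different and more concrete route than you do.

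The paper does \emph{not} reduce from characteristic zero. Instead it builds everything directly over $k$: it writes down an explicit Weierstrass family of elliptic K3 surfaces $Y_r$ with $r$ fibres of type $\IK_2$ (the value of $r\leq 5$ depending on $c\bmod 2d$ via a sum-of-squares trick), notes that this family has $18-r$ moduli so that the generic $Y_r$ has $\rho=2+r$ with $\Pic\supset U\oplus A_1^r$, then exhibits an explicit very ample $H=NF+dO-\Theta_1-n_2\Theta_2-\cdots$ on $Y_r$ whose intersection with $F$ gives the desired Gram matrix. Finally it invokes Deligne's result \cite[Prop.~1.5]{Deligne} to deform $Y_r$ together with the classes $H,F$ in an $18$-dimensional family over $k$; the dimension then forces the generic Picard lattice to be exactly the rank-2 lattice spanned by $H$ and $F$.

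Your proposal in positive characteristic has a real gap. The ``countable union of proper closed subsets'' argument for Noether--Lefschetz loci does not work over countable fields such as $\overline{\mathbb F_p}$: a countable union of proper closed subsets can exhaust all closed points. So you cannot conclude that the generic Picard rank is exactly $2$ this way. You would also need to justify that your complex example has good reduction at $p$, which is not automatic. The paper's fix for both issues is the same: start from an explicit model over $k$ with known moduli count, and use Deligne's characteristic-free deformation theory rather than Hodge-theoretic Noether--Lefschetz. Your ``cleanest fix'' paragraph gestures at this, but without the explicit starting surface and without the citation to Deligne, the argument is not complete.
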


\begin{proof}
We will obtain the desired K3 surfaces by deforming certain other K3 surface $Y_r$.
To set up the K3 surfaces, fix $c_0\in\{2,4,\hdots,2d\}$ such that
\[
c\equiv -c_0\mod 2d.
\]
Write $c_0-2$ as twice the sum of at most $r$ squares:
\[
c_0 = 2(1+n_2^2+\hdots+n_r^2), \;\;\; 1\leq r\leq 5, \; n_i\in\NN.
\]
Consider a K3 surface $Y_r$
admitting an elliptic fibration with zero section $O$
and $r$ fibres of type $\IK_2$.
Independent of the characteristic,
it is a consequence of Tate's algorithm \cite{Tate}
that $Y_r$ can be given in Weierstrass form
\[
y^2 + a_2 xy + a_{6-r}g_ry = x^3 + a_4x^2 + a_{8-r}g_rx+b_{12-2r}g_r^2
\]
where $a_i, b_j, g_r\in k[t]$ with the subscript indicating the degree
and $g_r$ squarefree.
One  verifies that the family of such elliptic K3 surfaces depends on $(18-r)$ parameters,
so the generic member will have $\rho=2+r$.
Here we shall work with a
general member $Y_r$ of this family which has no other reducible singular fibres
while being non-supersingular.
Thus there is a primitive embedding
\[
U \oplus A_1^r\hookrightarrow \Pic(Y_r) 
\]
where the sublattice is
generated by the fibre $F$, the zero section $O$ and the fibre components $\Theta_1,\hdots,\Theta_r$
not meeting $O$
(additional generators of $\Pic(Y_r)$ are given by sections by \cite{MWL}).
Let $N\in\NN$ and consider the divisor
\[
H = NF + dO - \Theta_1-n_2\Theta_2-\hdots-n_5\Theta_5.
\]
For $N>2d$, one directly verifies using Criterion \ref{crit}
that $H$ is very ample.

%
By \cite[Prop. 1.5]{Deligne}, the K3 surface $Y_r$ deforms together with the divisor classes $H, F$
in an 18-dimensional family over $k$.
By construction, the generic member has Picard lattice isometric to the stated one.
\end{proof}

\begin{rem} For complex K3 surfaces Lemma~\ref{lem:rk2} follows immediately from the general theory
developed by Nikulin (\cite[Cor.~1.12.3]{nikulin}, see also \cite[Cor.~14.3.1]{Huybrechts}). 
Over fields of positive characteristic, however, lattice theory no longer suffices to prove the existence
of K3 surface with a given Picard group  (see e.g. \cite[Remark~14.3.2]{Huybrechts}).
\end{rem}

\subsection*{Proof of Theorem \ref{thm} (iii)}
We continue with the family of K3 surfaces from  Lemma \ref{lem:rk2}.
Applying an isometry of $\Pic$, we may assume that $c\in\{-2,0,2,\hdots,2d-4\}$.
Denote the generators of $\Pic$ by $D, C$.
By Riemann--Roch, the isotropic vector $D$ is either effective or anti-effective,
so let us assume the former by adjusting the signs of $D$ and $C$, if necessary.
Then $|D|$ may still involve some base locus
which can be eliminated by the composition $\sigma$ of a finite number of reflections.
The resulting divisor $E=\sigma(D)$ is a fibre of a genus one fibration.
We choose $X$ general in the family of K3 surfaces
such that $X$ is not supersingular and all singular fibres of the genus one fibration are nodal cubics
(so they are 24 in number by \eqref{eq:e=24}).

Turning to the divisor $B=\sigma(C)$, it is effective, again by Riemann--Roch (and since $E.B=d>0$).
Since the fibre class $E$ is nef, every irreducible component $B'$ of the support of $B$ satisfies
\[
0 \leq B'.E\leq d,
\]
where the left inequality becomes equality if and only if $B'$ itself is a fibre.
Arguing with all components of the support of $B$ and with all other multisections of the genus one fibration,
one verifies using Criterion \ref{crit} that $NE+B$ is very ample for $N>2d$.


Applying this procedure separately to all values $c\in\{-2,0,2,\hdots,2d-4\}$,
we find   K3 surfaces of degree $\tD$ with 24 rational curves of degree exactly $d$
(the images of the singular fibres)
for all 
$\jD\geq d(2d+1)-1$.
\qed

\begin{rem}
\label{rem:24-2}
In characteristic $2$, the same construction can be carried out 
to produce K3 surfaces with ample divisors $H$ by the Nakai--Moishezon criterion.
Then $mH$ is very ample for all $m\geq m_0$ for a certain $m_0\in\NN$,
so we get projective models of K3 surfaces with 24 rational curves of degree $md$
(and infinitely many such polarizations fixing $md$, 
because we can always add positive multiples of the nef divisor
$E$ to $mH$ to obtain further very ample divisors).
\end{rem}


\section{K3 surfaces with 24 smooth rational curves}
\label{s:smooth}

This section aims to exhibit explicit projective models of K3 surfaces attaining the bounds
from Theorems \ref{thm} -- \ref{thm3} for infinitely many 
degrees $H^2=\tD$
although we restrict to smooth rational curves exclusively.
Throughout we fix an integer 
 $d$ and only consider
K3 surfaces with smooth rational curves of degree exactly $d$
to simplify the exposition.
By the discussion of the hyperbolic case in  \ref{ss:pf}, once $\jD>42d^2$,
all curves have to be fibre components of some genus one fibration.
In the non-quasi-elliptic case (e.g. outside characteristics $2,3$),
comparing \eqref{eq:e=24} and \eqref{eq:comps} shows that
all singular fibres have to be multiplicative and reducible (Kodaira type $\IK_n, n>1$);
since $F.H=dn$ is fixed, they \emph{all} have the same type.
This gives three cases,
\begin{eqnarray}
\label{eq:configs}
12\times \IK_2,\;\;\; 8\times \IK_3,\;\;\; 6\times \IK_4
\end{eqnarray}
which we will study in detail in what follows.
(There is an additional case $4\times \IK_6$ in characteristic 2
while the other combinatorial cases are ruled out by the Shioda--Tate formula \cite[Cor. 5.3]{MWL}.)

We shall start with models covering the minimal degree $d=1$.
This obviously rules out the first configuration from \eqref{eq:configs}
since then each pair of fibre components would meet in two points
which is impossible for lines.

\subsection{Fermat surface ($6\times \IK_4$)}

Assume that char$(k)\neq 2$.
Let $X_4$ be the Fermat quartic surface,
defined by
\[
X_4 = \{x_0^4-x_1^4+x_2^4-x_3^4=0\}\subset\PP^3.
\]
This has 48 lines over $k$  
 (112 in characteristic $3$, see 
\ref{ss:3}),
and the signs were chosen for 8 lines  such as
\begin{eqnarray}
\label{eq:4lines}
\{x_0\pm x_1 = x_2\pm x_3=0\}
\end{eqnarray}
to be defined over the prime field.
As noted in \cite{Barth}, the morphism
\begin{eqnarray*}
\pi: \;\;\;\;\;\;\;\; X_4 \;\;\;\;\;\;\; & \to & \;\;\;\;\;\;\;\;\;\PP^1\\
~[x_0,x_1,x_2,x_3] & \mapsto & [x_0^2-x_1^2,x_2^2+x_3^2]
\end{eqnarray*}
defines a genus one fibration with 6 fibres of Kodaira type $\IK_4$,
each comprising four of the lines
(for instance, the fibre at $[0,1]$ is just the 4-cycle of lines from \eqref{eq:4lines}).
The other lines serve as bisections,
and over $\CC$ or fields of characteristic $p\equiv 1\mod 4$,
one can show that there are no sections. 
The next fact, just like the ones to follow, can easily be checked using Criterion \ref{crit},
so we omit the details.

\begin{Fact}
Let $F$ denote a fibre of $\pi$,
$H_0$ a hyperplane section of $X_4\subset\PP^3$ and $N\in \NN_0$. Then $H=NF+dH_0$ is very ample.
\end{Fact}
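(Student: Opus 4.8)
The goal is to verify that $H = NF + dH_0$ is very ample on $X_4$ using Criterion \ref{crit}, where $F$ is a fibre of $\pi$ and $H_0$ a hyperplane section. First I would compute the numerical invariants: since $F^2 = 0$, $F.H_0 = 4$ (the fibre has degree $4$ in $\PP^3$), $H_0^2 = 4$, and $F.H_0 = d\cdot 4$ accounting... more precisely $H^2 = 2Nd\cdot F.H_0 + d^2 H_0^2 = 8Nd + 4d^2 > 8$ for $N \geq 1$, so condition (3) of Criterion \ref{crit} is automatic (and for $N=0$, $H = dH_0$ is a positive multiple of the very ample $H_0$, hence very ample, so we may assume $N \geq 1$; in fact for $d\geq 2$ even $H^2 > 8$ at $N=0$). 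The real work is conditions (1) and (2): we must show $H.C > 0$ for every curve $C$, and $H.E > 2$ for every irreducible curve $E$ of arithmetic genus $1$.

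\textbf{Key steps.} As flagged in Section \ref{s:set-up} (the remark following Criterion \ref{crit}), every irreducible curve on $X_4$ is either a component $\Theta$ of a fibre of $\pi$ or a multisection $D'$ of index $d' = F.D' \geq 1$. I would split into these two cases. If $C = \Theta$ is a fibre component, then $F.\Theta = 0$ and $\Theta$ is one of the $(-2)$-curves in an $\IK_4$ fibre, with $H_0.\Theta = 1$ (it is a line on the quartic); hence $H.\Theta = dH_0.\Theta = d \geq 1 > 0$, giving condition (1), and since $\Theta$ is smooth rational it never has arithmetic genus $1$, so condition (2) is vacuous for it. If $C = D'$ is a multisection of index $d' \geq 1$, then $H.D' = Nd'\cdot F.H_0 + d(H_0.D') \geq N\cdot F.H_0 \cdot 1 = 4N > 0$ using $H_0.D' \geq 1$ (it is an irreducible curve on a projectively normal surface, so it has positive degree) and $d' \geq 1$; this settles condition (1) in all cases. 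For condition (2), suppose $p_a(E) = 1$; then $E$ cannot be a fibre component of the (reducible) fibres, and an irreducible curve of arithmetic genus $1$ with $E^2 \geq -2$ and $p_a(E)=1$ forces $E^2 = 0$ (by adjunction on a K3), so $E$ is itself a fibre of some genus one fibration; in particular as a multisection of $\pi$ it has $d' = F.E \geq 1$, and then $H.E = Nd'\cdot F.H_0 + d(H_0.E) \geq 4N + d \geq 4 + 1 = 5 > 2$ for $N \geq 1$. This completes the verification.

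\textbf{Main obstacle.} The one point requiring care is condition (2) when $N = 0$: there $H = dH_0$, and one needs $dH_0.E > 2$ for every arithmetic-genus-$1$ curve $E$, i.e.\ $H_0.E \geq \lceil 3/d \rceil$. For $d \geq 3$ this is $H_0.E \geq 1$, which is automatic; for $d = 2$ one needs $H_0.E \geq 2$, i.e.\ no genus-$1$ curve meets a plane in exactly one point, which holds because such a curve would be a line, contradicting $p_a = 1$; for $d = 1$, $H = H_0$ is very ample by definition. So the statement is cleanest for $N \geq 1$, and the edge case $N = 0$ reduces to the already-known very ampleness of $H_0$ (and its positive multiples). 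Thus I expect no genuine difficulty, only the bookkeeping of separating the fibre-component and multisection cases and checking the small-$N$, small-$d$ corners against Criterion \ref{crit}.
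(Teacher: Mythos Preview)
Your approach is exactly what the paper intends: it simply says the fact ``can easily be checked using Criterion \ref{crit}'' and omits the details, so the fibre-component/multisection split against Criterion \ref{crit} is the right route.

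Two slips to fix. First, your intersection computation for multisections has a stray factor of $F.H_0$: the correct formula is $H.D' = N(F.D') + d(H_0.D') = Nd' + d(H_0.D')$, not $Nd'\cdot F.H_0 + d(H_0.D')$. Second, in condition (2) you assert that any $p_a=1$ curve $E$ is a multisection of $\pi$, but $E$ may well be a smooth fibre of $\pi$ itself (then $F.E=0$, and $H.E = 4d > 2$, so this case is harmless once noted). After correcting the first slip, your bound for a genus-one multisection reads only $H.E \geq N + d$, which equals $2$ when $N=d=1$; to close the gap you need the observation you already made in your $N=0$ discussion, namely that a curve with $p_a=1$ cannot be a line or a conic, so $H_0.E \geq 3$ and hence $H.E \geq N + 3d \geq 3 > 2$ for all $N\geq 0$. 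With these adjustments your verification goes through cleanly.
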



For any $\jD=2d(2N+d)$ we thus obtain a degree-$\tD$  model of $X_4$
containing 24 smooth rational curves of degree $d$
(the images of the fibre components).
\begin{rem}
In characteristic $p\equiv-1\mod 4$, $X_4$ is supersingular,
and the fibration $\pi$ has sections, accounting for the jump of the Picard number.
The sections allow us to find  polarizations of $X_4$
for further values for $H^2$
with 24 smooth rational curves of degree $d$.
\end{rem}

\subsection{Hesse pencil ($8\times \IK_3$)}
Assume that char$(k)\neq 3$.
Let $S$ be the rational elliptic surface defined by the Hesse pencil
\[
S: \;\;\; x^3+y^3+z^3 = 3txyz.
\]
Then $S$ has 4 singular fibres of type $\IK_3$ at $\infty$ and the third roots of unity.
The Mordell--Weil group of $S$ consists of the nine base points of the pencil, $P_1,\hdots,P_9$.

\begin{lemm}
Let $F'$ denote a fibre of $S$. Then the class $D'=F'+P_1+\hdots+P_9$ is  3-divisible in $\Pic(S)$.
\end{lemm}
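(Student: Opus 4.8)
The plan is to work on the rational elliptic surface $S$ defined by the Hesse pencil and to exhibit an explicit $3$-divisor of the class $D' = F' + P_1 + \dots + P_9$. First I would recall that on a rational elliptic surface the Picard lattice has rank $10$ and that the classes of the zero section and the fibre span a copy of the hyperbolic plane $U$; the remaining eight sections $P_2,\dots,P_9$ (taking $P_1 = O$ as the zero section) together with the non-identity components of reducible fibres generate the rest. For the Hesse pencil the trivial lattice has type $4\tilde A_2$, and the Mordell--Weil group is $(\ZZ/3\ZZ)^2$, realized precisely by the nine base points $P_1,\dots,P_9$ of the pencil. The key structural input is that $S$ carries a free $\ZZ/3\ZZ$-action (translation by a $3$-torsion section), or equivalently that the nine base points form two orbits of size $3$ (plus the identity) under the group law, and crucially that the quotient of $S$ by this action is again a rational elliptic surface.

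\textbf{Main steps.} Concretely I would argue as follows. Step 1: identify the linear system. The nine base points $P_1,\dots,P_9$ are exactly the intersection of any two members of the pencil $x^3+y^3+z^3 = 0$ and $xyz = 0$, so on the blow-up $S \to \PP^2$ the pullback of the class of a line $\ell \subset \PP^2$ equals $F' + E_1 + \dots + E_9$ up to the blow-down identifications, where $E_i$ are the exceptional curves over the base points; since the base points lie on the cubics of the pencil, the proper transform of a generic cubic is a fibre $F'$, and the relation $3\ell = $ (cubic) in $\Pic(\PP^2)$ shows that the class $\ell$ pulls back to a class whose triple is $3F' + 3\sum E_i = $ (pullback of a cubic) $+ 2\sum E_i$, which I would rearrange to exhibit $D'$ itself as three times an effective class. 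Step 2: make the divisibility explicit. The cleanest route is: $\mathcal{O}_{\PP^2}(1)$ is a cube root of $\mathcal{O}_{\PP^2}(3) = \mathcal{O}_{\PP^2}(\text{cubic})$, and pulling back to $S$ and twisting by the exceptional divisors converts the relation $H_{\PP^2} = \tfrac13(3H_{\PP^2})$ into $\tfrac13 D' = H_{\PP^2}^* - \sum E_i$ (or a similar combination) inside $\Pic(S)$; one then checks that the right-hand side is an integral class by computing intersection numbers against the generators $F', O, \Theta_{ij}$ and the $P_i$, using that each $P_i$ has $P_i^2 = -1$, $P_i \cdot F' = 1$, and the known intersection pattern of the base points with the $\tilde A_2$ fibre components.

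\textbf{The main obstacle.} The technical heart, and the step I expect to be fussiest, is verifying that the candidate class $\tfrac13 D'$ lies in $\Pic(S)$ and not merely in $\Pic(S)\otimes\QQ$ — that is, pinning down the exact integral combination of $F'$, $O$, the eight non-identity sections, and the eight non-identity $\tilde A_2$-components whose triple is $D'$. This amounts to a finite but somewhat delicate lattice computation: one writes $D' = F' + \sum_{i=1}^{9} P_i$ in terms of a basis of $\Pic(S)$ (e.g. $F', O$, the $\Theta$'s, and the generators of $\MW(S) \cong (\ZZ/3\ZZ)^2$ lifted to sections via the Shioda map \cite{MWL}), and observes that the height pairing forces the torsion sections to contribute correction terms supported on the $4\tilde A_2$ fibres with coefficients in $\tfrac13\ZZ$, and that these $\tfrac13$'s are exactly cancelled when one forms $D'$ and divides by $3$. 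Alternatively, and perhaps more transparently, I would invoke the quotient map $q\colon S \to S' := S/(\ZZ/3\ZZ)$ by translation by a $3$-torsion section: then $q^*$ of the fibre class of $S'$ is $F'$, $q^*$ of a section of $S'$ accounts for three of the $P_i$, and $3$-divisibility of $D'$ becomes the statement that $D'$ is a pullback under $q$ composed with a further evident cube root on $S'$; the obstacle then migrates to checking that $S'$ is smooth (the action is free, so it is) and identifying its configuration, which is routine. Either way, no conceptual difficulty remains once the bookkeeping with the $4\tilde A_2$ fibres and the $(\ZZ/3\ZZ)^2$ of sections is carried out carefully.
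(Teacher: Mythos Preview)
Your first idea---realising $S$ as the blow-up of $\PP^2$ at the nine base points of the Hesse pencil---is correct and in fact yields a one-line proof, but your execution is muddled. The exceptional curves $E_i$ over the base points \emph{are} the sections $P_i$, and the fibre class is the proper transform of a cubic through all nine points, i.e.\ $F' = 3\ell^* - E_1 - \cdots - E_9$ where $\ell^*$ is the pullback of the line class. Hence
\[
D' = F' + \sum_{i=1}^9 P_i = \bigl(3\ell^* - \textstyle\sum E_i\bigr) + \textstyle\sum E_i = 3\ell^*,
\]
and the $3$-divisibility is immediate. Your Step~1 misstates the relation (the pullback of a \emph{line} is not $F' + \sum E_i$; that is the pullback of a \emph{cubic}), and your Step~2 formula $\tfrac13 D' = \ell^* - \sum E_i$ is off for the same reason. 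Once this is corrected, the ``main obstacle'' you anticipate---checking integrality of $\tfrac13 D'$ by a lattice computation---simply evaporates.

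The paper takes a different route: it works entirely inside the elliptic-surface framework, using Shioda's isomorphism $\MW(S) \cong \Pic(S)/T$ (with $T$ the trivial lattice generated by $P_9$ and fibre components) to write $P_1 + \cdots + P_8 = 8P_9 + 3\sum_i \Theta_i - 4F'$ explicitly, then adds $P_9 + F'$ to both sides to obtain $D' = 3(3P_9 + \sum\Theta_i - F')$. This is precisely the ``finite but somewhat delicate lattice computation'' you flagged as the fallback. Your blow-up argument is more elementary and avoids Mordell--Weil theory altogether; the paper's argument stays within the language used elsewhere in the section and makes the cube root visible in terms of the fibration data, which is what is needed downstream when pulling back to the K3 cover. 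The quotient-by-$\ZZ/3\ZZ$ alternative you sketch is unnecessary here and would require more care than either of the other two approaches.
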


\begin{proof}
%
Picking $P_9$ as zero of the group law, say, 
the theory of Mordell--Weil lattices  \cite{MWL} gives an isomorphism
\[
\MW(S) = \Pic(S)/(\text{trivial lattice generated by fibre components and } P_9).
\]
Presently this yields
\[
P_1 + \hdots + P_8 = 8P_9 + 3 \sum_{i=1}^4 \Theta_i - 4 F',
\]
where the $\Theta_i$ denote the rational fibre components met by $P_9$.
Adding $P_9$ and $F'$ on both sides, we derive the claimed divisibility.
\end{proof}

Consider the base change $X_3$ of $S$ by a separable quadratic morphism
$\PP^1\to\PP^1$
which is unramified at the singular fibres.
Then $X_3$ is an elliptic K3 surface with eight fibres of type $\IK_3$ and the said nine sections
(but now featuring as $(-2)$-curves).
Let $F$ denote a fibre of $X_3$.
By pull-back, the divisor $D=2F+P_1+\hdots+P_9\in\Pic(X_3)$ is 3-divisible.

\begin{Fact}
Let $N\in\NN$. Then $H=NF+\frac d3 D\in\Pic(X_3)$ is very ample.
\end{Fact}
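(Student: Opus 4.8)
The plan is to verify the three conditions of Criterion \ref{crit} for the divisor $H = NF + \frac{d}{3}D$ on $X_3$, exploiting the genus one fibration structure exactly as foreshadowed at the end of Section 2. Since $\frac{d}{3}D = \frac{d}{3}(2F + P_1 + \dots + P_9)$, we may rewrite $H = (N + \frac{2d}{3})F + \frac{d}{3}(P_1 + \dots + P_9)$; the key point is that $D$ is $3$-divisible by the previous lemma, so $\frac{d}{3}D$ is a genuine integral divisor class, and $H$ is effective for $N$ large.

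First I would classify the irreducible curves on $X_3$ into fibre components $\Theta$ and multisections $D'$ of index $d' = F.D' \geq 1$, as in the set-up paragraph. For condition (1), I compute $H.\Theta$ and $H.D'$: since $F.\Theta = 0$ and $F.D' = d' > 0$, the term $NF$ contributes $0$ resp.\ $Nd'$, and one checks that $\frac{d}{3}D.\Theta \geq 0$ for every fibre component (with strict positivity in enough cases, e.g.\ the $\Theta_i$ met by the sections), while $\frac{d}{3}D.D' > 0$ automatically once $D'$ meets the support of $D$; the potentially problematic fibre components — those in the eight $\IK_3$ fibres disjoint from all sections — are handled because $NF.\Theta=0$ forces us to use that $D$ restricted to each such fibre is still effective of positive degree there, or more simply by noting such a $\Theta$ still meets some section after all since the $P_i$ sweep out the fibres. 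For condition (2) I run the same computation over irreducible curves $E$ of arithmetic genus one: such $E$ satisfies $E^2 = 0$, hence $E$ is itself a fibre (nef isotropic of minimal degree) or a multisection with $F.E \geq 2$; in the fibre case $H.E = H.F = d$ which must exceed $2$, giving the bound $d \geq 3$ (consistent with the standing hypothesis $d \geq 3$ of this section), and in the multisection case $H.E \geq 2N + (\text{positive}) > 2$ for $N \geq 1$. For condition (3), $H^2 = 2 \cdot \frac{d}{3}(N + \tfrac{2d}{3}) \cdot 9 \cdot (\text{section degree contributions})$ — more precisely $H^2 = 2dN + \tfrac{4d^2}{3} - \tfrac{2}{3}d^2 \cdot(\text{correction})$ grows without bound in $N$, so $H^2 \geq 4$ and $H^2 \neq 8$ for $N$ large; alternatively one checks $H$ is not $2$-divisible by intersecting with a section.

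The main obstacle I expect is condition (2): ensuring $H.F = d$ is strictly greater than $2$, which is exactly why this section fixes $d \geq 3$, and more subtly ruling out that some \emph{reducible} genus one divisor (a Kodaira-type configuration of $(-2)$-curves, e.g.\ a sub-fibre or a cycle built from sections and fibre components) meets $H$ with intersection $\leq 2$; here I would argue that any such configuration $E$ has $E.F \geq 1$ with equality only if $E$ is a full fibre, and if $E.F = 0$ then $E$ is supported in a single $\IK_3$ fibre hence is either a proper sub-configuration (not of arithmetic genus one) or the whole fibre (covered above), so no new constraint arises. Once all three conditions are in place, Criterion \ref{crit} yields that $H$ is very ample for every $N \in \NN$, completing the Fact; the eight $\IK_3$ fibres then embed as $24$ smooth rational curves, each of degree $\Theta.H = \frac{d}{3}D.\Theta = d$ by a direct local computation on a single $\IK_3$ fibre, and letting $N$ vary produces infinitely many polarization degrees $H^2$ realizing $r_d = 24$.
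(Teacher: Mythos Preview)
Your overall plan---verify Criterion \ref{crit} using the fibration structure as sketched at the end of Section 2---is exactly what the paper intends; the paper itself omits all details. However, several of your computations are off, and one error propagates into a false restriction on $d$.

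The key slip is $H.F$. Since $F^2=0$ and $D.F = (2F+P_1+\dots+P_9).F = 9$, one has $H.F = \tfrac{d}{3}\cdot 9 = 3d$, not $d$. Thus condition (2) for an irreducible genus-one fibre reads $3d>2$, which holds for \emph{every} $d\geq 1$; no hypothesis $d\geq 3$ is needed. (The assumption $d\geq 3$ you invoke belongs to the \emph{previous} section; the present section explicitly aims to cover $d=1$ as well.) For a genus-one multisection $E$ one has $F.E\geq 2$ as you note, hence $H.E\geq 2N+\tfrac{d}{3}\cdot 2(F.E)>2$. Criterion \ref{crit}(2) concerns only \emph{irreducible} curves, so the digression on reducible Kodaira-type configurations is unnecessary.

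Your worry about fibre components disjoint from all sections is also misplaced: the nine torsion sections form $(\ZZ/3\ZZ)^2$ and act transitively on the three components of each $\IK_3$ fibre, so every component $\Theta$ is met by exactly three of the $P_i$, giving $H.\Theta = \tfrac{d}{3}\cdot 3 = d>0$. For the sections themselves, $D.P_i = 2-2 = 0$ (the $P_i$ are pairwise disjoint on $X_3$, being pullbacks of disjoint $(-1)$-curves), so $H.P_i = N>0$. Any other irreducible multisection $C$ has $H.C\geq N + \tfrac{2d}{3}>0$. Finally $D^2=36-18=18$ gives $H^2 = 6Nd + 2d^2 = 2d(3N+d)\geq 8$, with equality only at $N=d=1$, where $H.P_1=1$ is odd so $H$ is not $2$-divisible.
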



This gives degree-$\tD$ models of $X_3$ with 24 smooth rational curves
of degree $d$
for $\jD=d(3N+d)$. 

\begin{rem}
\label{rem:24-22}
As in Remark \ref{rem:24-2}, for infinitely many $d$, we  obtain non-unirational projective K3 surfaces
with infinitely many polarizations of degree $\tD$,
containing exactly 24 smooth rational degree-$d$ curves in characteristic 2.
\end{rem}

%
%
%

\subsection{Configuration $12\times \IK_2$}

Assume that char$(k)\neq 2$ and consider squarefree polynomials $f,g\in k[t]$
of degree $4$ such that $f-g$ is also squarefree of the same degree.
Then the extended Weierstrass form 
\[
y^2 = x(x-f)(x-g)
\]
defines an elliptic K3 surface $X_2$ over $\PP^1_t$
with twelve singular fibres of type $\IK_2$ at the zeroes of $f, g$ and $f-g$.
Generically, one has $\MW(X_2)\cong(\ZZ/2\ZZ)^2$
with disjoint sections 
\[
P_1 = (0,0), \;\;\; P_2 =  (0,f),\;\;\; P_3 =  (0,g)
\]
and $P_0$ the point at $\infty$.

\begin{Fact}
Assume that $d$ is even.
Let $F$ denote a fibre and $N>d$. Then
$H=NF+\frac d2 (P_0+P_1+P_2+P_3)$ is very ample.
\end{Fact}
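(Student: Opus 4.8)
The plan is to verify the three conditions of Criterion \ref{crit} for the divisor $H=NF+\tfrac d2(P_0+P_1+P_2+P_3)$ on $X_2$, exploiting the structure of the elliptic fibration. Write $P=P_0+P_1+P_2+P_3$ for the sum of the four disjoint $2$-torsion sections, so each $P_i^2=-2$, $P_i.P_j=0$ for $i\neq j$, $F.P_i=1$, hence $F.P=4$ and $P^2=-8$. Then $H^2=2Nd + \tfrac{d^2}{4}P^2 \cdot\tfrac14\cdot$(careful bookkeeping) $=2Nd+\tfrac{d^2}{16}(-8)\cdot$ — more precisely $H^2=2\cdot N\cdot\tfrac d2\cdot 4 + \tfrac{d^2}{4}P^2=4Nd-2d^2=2d(2N-d)$, which for $N>d$ is positive and $\geq 4$, and one checks it is never $8$ in the relevant range (or that $H$ is not $2$-divisible), settling condition (3).

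First I would treat condition (1): any irreducible curve $C\subset X_2$ is either a fibre component $\Theta$ or a multisection $D'$ of index $d'=F.D'\geq 1$. For a fibre component $\Theta$ one has $F.\Theta=0$, so $H.\Theta=\tfrac d2 P.\Theta$; since $P$ is the sum of four sections hitting pairwise distinct components of each reducible ($\IK_2$) fibre, and $\Theta$ meets at least one $P_i$ positively while meeting the others non-negatively, $P.\Theta>0$, giving $H.\Theta>0$. For a multisection $D'$, $H.D'=Nd' + \tfrac d2 P.D'\geq Nd'>0$ since $N>0$ and $P.D'\geq 0$ (the $P_i$ are effective and $D'$ is irreducible, not equal to any $P_i$ unless it is one of them, in which case $H.P_i=N-d>0$). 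Next, condition (2): for an irreducible $E$ with $p_a(E)=1$ we need $H.E>2$. Such $E$ cannot be a section (those are rational), so if $E$ is a fibre component it is a whole fibre $F$ (or a fibre of Kodaira type $\IK_1$, a nodal cubic), giving $H.E=H.F=\tfrac d2\cdot 4=2d\geq 2$ — and strictly $>2$ once $d\geq 2$, which holds since $d$ is even and positive, unless $d=2$ forces $H.F=4>2$, fine; if instead $E$ is a multisection then $d'=F.E\geq 2$ (as $p_a(E)>0$ forces index $>1$, noted after Criterion \ref{crit}), so $H.E\geq 2N>2$.

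The main obstacle I expect is bounding $P.D'$ from above is \emph{not} needed, but ensuring the genus-one count is clean: one must confirm that for generic squarefree $f,g$ with $f-g$ squarefree of degree $4$, the only reducible fibres are the twelve $\IK_2$ fibres (so $e(X_2)=24$ forces exactly these, by \eqref{eq:e=24} and \eqref{eq:comps}) and that $\MW(X_2)\cong(\ZZ/2\ZZ)^2$ generically, which is the standard full-two-torsion Weierstrass form; the $(-2)$-curves of degree $d$ on the resulting degree-$\tD$ model are then precisely the $24$ fibre components. Assembling these, Criterion \ref{crit} applies and $|H|$ embeds $X_2$ as a surface of degree $H^2=2d(2N-d)$ carrying $24$ smooth rational curves of degree $d=C.H$ (each fibre component $\Theta$ meets $P$ in exactly one point, so $H.\Theta=\tfrac d2\cdot 1\cdot$ — rechecking: $\Theta.P_i=1$ for exactly one $i$ and $0$ for the rest within a given $\IK_2$ fibre, hence $H.\Theta=\tfrac d2$? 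That halves the degree, so instead the correct normalization must make $H.\Theta=d$, meaning each $\Theta$ meets $P$ with multiplicity $2$; indeed in an $\IK_2$ fibre each of the two components is met by exactly two of the four $2$-torsion sections, so $\Theta.P=2$ and $H.\Theta=\tfrac d2\cdot 2=d$ as required). This yields models for every $\jD=2d(2N-d)$ with $N>d$, i.e. for all sufficiently large $\jD$ in the relevant residue class, completing the even-$d$ case of the $12\times\IK_2$ configuration.
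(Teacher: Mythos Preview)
Your proposal is correct and follows exactly the approach the paper indicates (the paper omits the details, simply saying the Fact ``can easily be checked using Criterion~\ref{crit}''). Despite the stream-of-consciousness presentation with inline self-corrections, your final computations are all right: $H^2=2d(2N-d)\geq 16$; each $\IK_2$ component meets exactly two of the four $2$-torsion sections so $H.\Theta=d>0$; the sections themselves give $H.P_i=N-d>0$; and irreducible genus-one curves are either fibres ($H.F=2d>2$) or multisections of index $\geq 2$ ($H.E\geq 2N>2$).
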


Therefore, we obtain  
 degree-$\tD$ models of $X_2$ for $\jD=d(2N-d)$
(and $d$ even)
with 24 smooth rational curves of degree $d$
in characteristic $\neq 2$.

\begin{rem}
Assuming $X_2$ to arise from a rational elliptic surface with six fibres of type $\IK_2$,
one can endow $X_2$ with two additional independent sections. 
This also allows one to realize polarizations $H^2\equiv d^2\mod 4d$.
\end{rem}

\subsection{Extra bound in characteristic 3}
\label{ss:p=3}\label{ss:3}

The Fermat quartic $X_4\subset\PP^3$
admits further elliptic fibrations;
one can be obtained by
fixing any  line $\ell\subset X_4$
and considering the pencil of hyperplanes containing $\ell$.
In characteristic $3$, the resulting fibration is quasi-elliptic with 10 fibres of type $\IV$
(compare \cite{RS-112} for the special role of this surface among quartics in characteristic $3$).
As before, denote a fibre of the fibration in question by $F$.

\begin{Fact}
If $N>2d/3$, then the divisor $H=NF+d\ell$ is very ample.
\end{Fact}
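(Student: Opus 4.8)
The plan is to verify the three conditions of Criterion \ref{crit} for the divisor $H = NF + d\ell$ on the quasi-elliptic fibration on $X_4$ in characteristic $3$, using the structure of the fibration: $F$ is a fibre class of a genus one fibration with $10$ fibres of type $\IV$, and $\ell$ is a section (since $\ell$ was the line used to generate the pencil, it meets the generic fibre in one point), so $F.\ell = 1$, $F^2 = 0$, $\ell^2 = -2$, and hence $H^2 = 2d(N\cdot 1) + d^2(\ell^2) \cdot\ldots$ — more precisely $H^2 = 2Nd(F.\ell) + d^2\ell^2 = 2Nd - 2d^2 = 2d(N-d)$, which is positive and at least $4$ once $N > 2d/3$ together with $d\geq 1$ (one should double-check the exact numerology here, but this is the routine part). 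For condition (3), since $H^2 = 2d(N-d)$, the value $8$ forces very restrictive numerology, and $2$-divisibility would contradict $F.H = d$ or $\ell.H$ being odd in the relevant cases; this is handled exactly as in the analogous Facts earlier in the section.

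The substantive step is conditions (1) and (2): one must show $H.C > 0$ for every curve $C$ and $H.E > 2$ for every irreducible curve $E$ of arithmetic genus $1$. As indicated in Section \ref{s:ex} (the paragraph following Criterion \ref{crit}), every irreducible curve on a K3 surface carrying a genus one fibration is either a fibre component $\Theta$ or a multisection $D'$ of index $d' = F.D' \geq 1$, with $p_a(D') > 0$ forcing $d' > 1$. For a fibre component $\Theta$ of one of the $\IV$ fibres, $F.\Theta = 0$, so $H.\Theta = d(\ell.\Theta) \geq 0$, and one checks $\ell.\Theta > 0$ for at least the component met by the section while $\Theta^2 = -2$ and $\Theta.F = 0$ together with the ample-on-fibres argument (i.e.\ $H$ restricted to a fibre is the pullback of an ample class plus a positive multiple of $\ell|_F$) gives positivity on every component; here one uses that $\ell$ meets each $\IV$ fibre in exactly one component and the remaining components are handled because a $\IV$ fibre with one component marked still has the property that $d\cdot(\ell\text{-part})$ is strictly effective-positive against each irreducible component once we are careful — alternatively, and more robustly, $NF + d\ell$ meets $\Theta$ non-negatively and equals zero only if $\Theta$ is disjoint from $\ell$ and vertical, which cannot make $H$ fail ampleness because such $\Theta$ are still intersected positively after adding enough $F$... the cleanest route is: $H.\Theta = 0$ would force $\ell.\Theta = 0$, but then $\langle F, \ell, \Theta\rangle$ or the fibre-plus-$\ell$ analysis shows $H$ is still a limit only on a negative configuration, contradicting that such $\Theta$ do exist as genuine curves — so in practice one argues $\ell$ meets every $\IV$ fibre and picks up all components through the fibre's internal connectivity. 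For a multisection $D'$ of index $d' \geq 1$, $H.D' = Nd' d + d(\ell.D') \geq Nd' d > 0$ since $N > 2d/3 > 0$; and for genus-one $E$ (necessarily $d' = F.E \geq 2$ since $p_a(E) = 1$), $H.E = N d' d + d(\ell.E) \geq 2Nd > 2$ once $N \geq 1$, in fact once $2Nd > 2$, i.e.\ $Nd > 1$, which holds. Thus (2) is automatic for multisections, and the only genus-one curves that are vertical are whole $\IV$ fibres $F_t$, for which $H.F_t = H.F = d > 0$ but we need $> 2$: here $d \geq 1$ is not enough, so one uses $d$... wait — this is where $d \geq 3$ or the precise hypothesis matters, and the earlier Facts in this section all quietly use $\deg(\text{fibre}) = dn \geq 2d > 2$; for type $\IV$, $F_t$ as a whole is not irreducible, so it does not trigger condition (2) at all, and individual components are rational, so (2) is vacuous for vertical curves.

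The hard part — or rather the only part requiring genuine bookkeeping — will be pinning down condition (1) for the fibre components $\Theta$ that are disjoint from $\ell$: one must confirm that in a type $\IV$ fibre (three components through one point), the section $\ell$ meets exactly one of the three, and the other two still satisfy $H.\Theta = d(\ell.\Theta) = 0$, so that $H.\Theta = 0$ — which would \emph{violate} (1)! This signals that the statement must secretly use that these components \emph{do} appear in $\Gamma$ and so we actually want $H$ such that they have positive degree; the resolution is that in a $\IV$ fibre every component meets $\ell$ or we must instead add a more clever correction, or — most likely given the parallel Facts — $\ell$ is chosen so that it passes through the singular point of \emph{one specially chosen} $\IV$ fibre, or the $24 = $ fibre-component count forces us to only claim degree-$d$ curves among components actually met. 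Re-reading: the Fact only asserts very ampleness of $NF + d\ell$, and for that one genuinely needs every vertical component to have positive intersection. The correct fix, which I would implement, is that a general hyperplane pencil through $\ell$ yields a fibration where $\ell$ meets each $\IV$-fibre in the \emph{simple branch}, and the other two components are exchanged/connected so that after adding $NF$ with $N$ large they are positive — no: $F.\Theta = 0$ always. The genuinely correct resolution is to replace $d\ell$ by $\tfrac{d}{?}(\text{sum over the section orbit})$ analogous to the Hesse case, or to observe $\Theta$ with $H.\Theta = 0$ cannot occur because such $\Theta$ would be $2$-torsion sections or because the Mordell–Weil/torsion structure rules it out; I would follow the template of the $8\times\IK_3$ Fact, using the $\MW$-structure of this quasi-elliptic fibration (the curve of cusps plus torsion sections) to show $\ell$ together with these generates a divisor meeting every component positively, and then invoke Criterion \ref{crit} exactly as in the earlier Facts, leaving the elementary intersection computations to the reader as the paper does throughout this section.
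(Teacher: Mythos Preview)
Your approach---verify Criterion \ref{crit}---is exactly what the paper intends, but you have miscomputed the basic intersection number $F.\ell$, and this error propagates through the entire proposal. The fibration arises from the pencil of hyperplanes through $\ell$; each such hyperplane cuts $X_4$ in a plane quartic containing $\ell$, and the \emph{residual cubic} is the fibre. Thus $F = H_0 - \ell$ as divisor classes, so
\[
F.\ell = H_0.\ell - \ell^2 = 1 - (-2) = 3,
\]
i.e.\ $\ell$ is a \emph{trisection}, not a section. This immediately gives $H^2 = 2Nd\cdot 3 - 2d^2 = 2d(3N-d)$, matching the paper's stated $h = 3dN - d^2$, and the hypothesis $N > 2d/3$ is precisely the condition $H.\ell = 3N - 2d > 0$.

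Once $F.\ell = 3$ is in place, your entire worry about fibre components $\Theta$ with $H.\Theta = 0$ evaporates. Each $\IV$ fibre consists of three lines through a point, all lying in a hyperplane that also contains $\ell$; two coplanar distinct lines in $\PP^3$ meet in exactly one point, so $\ell.\Theta = 1$ for every one of the $30$ fibre components, and $H.\Theta = d > 0$. For any other irreducible curve $D'\neq\ell$ one has $\ell.D'\geq 0$ and $F.D'\geq 0$, so $H.D' = N(F.D') + d(\ell.D') > 0$ unless $D'$ is a fibre component (already handled). Condition (2) is equally direct: irreducible fibres (cuspidal cubics) have $H.F = 3d \geq 3$, while an irreducible multisection $E$ with $p_a(E)=1$ has $F.E \geq 2$, hence $H.E \geq 2N$; for $d\geq 2$ the hypothesis forces $N\geq 2$, and for $d=1,\,N=1$ one has $H = H_0$ which is very ample by construction. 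Condition (3) is then routine. In short, the proof is a clean bookkeeping exercise once the single number $F.\ell = 3$ is corrected.
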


Thus we obtain degree-$\tD$ models of $X_4$ with $\jD=3dN-d^2$
which contain 30 smooth rational curves of degree $d$ in characteristic 3.

\subsection{Bounds in characteristic 2}
\label{ss:2}

We have already seen in Remarks \ref{rem:24-2}, \ref{rem:24-22}
how the bound from Theorem \ref{thm2} (i) can be attained in characteristic 2.
It remains to establish the same statement for the bound in 
Theorem \ref{thm2} (ii).
To this end, let $d\geq 2$ and consider a quasi-elliptic K3 surface $X$
with 20 fibres of type $\III$ as in \cite{RuS}.
Then the curve of cusps $C$ can be regarded as a smooth rational bisection
which meets each component of a reducible fibre with multiplicity one.
Denoting a fibre by $F$, it follows from the Nakai--Moishezon criterion
that the divisor $H=NF+dC$ is  ample for any $N>d$.
%

For $m\gg 0$, we thus obtain projective models of $X$ with infinitely many different polarizations
$mH+nF \;(n\in\NN_0)$, 
each of which contains exactly 40 smooth rational curves of degree $md$.

\smallskip
The bound \eqref{eq-25} from Remark \ref{rem} arises from considering
quasi-elliptic K3 surfaces $X'$ with 5 fibres of type $\IK_0^*$.
We conjecture that  \eqref{eq-25} is sharp (at least for large $h$). Indeed, let us consider the surface  $X'$.
The curve of cusps $C$ and the double fibre components $\Theta_1,\hdots,\Theta_5$
allow us to define the ample divisor
\[
H = NF + d(3C + \Theta_1+\hdots+\Theta_5)
\]
for $N>d/2$. Since $H$ meets all the requirements of Criterion \ref{crit} for $N\geq 2$, 
we conjecture that, at least for $N\gg 0$, it is very ample
and thus yields projective models of $X'$ with 25 lines.


\subsection*{Acknowledgement} 
We are grateful to the anonymous referee for valuable comments.
S.~Rams would like to thank J.~Byszewski for inspiring remarks.

%
%
%
%
%
%
%


\begin{thebibliography}{99}


\bibitem{Barth}
Barth, W.:
\emph{Lectures on K3- and Enriques surfaces},
Algebraic Geometry (Sitges 1983). 
Lecture Notes in Math. {\bf 1124}, 21--57. Springer, New York (1985).

%

%

\bibitem{BT} Bogomolov, F.~A., Tschinkel, Y: \emph{Density of rational points on elliptic $K3$ surfaces}, Asian J.~Math.~{\bf 4}  (2000),  no.~2, 351--368.



\bibitem{BHT}
 Bogomolov, F.~A., 
 Hassett, B.,
 Tschinkel, Y.:
 \emph{Constructing rational curves on K3 surfaces},
Duke Math.~J.~{\bf 157} (2011), 535--550.


\bibitem{CGL}
Chen, X.,
Gounelas, F.,
Liedtke, C.:
\emph{Curves on K3 surfaces},
to appear in Duke Math. J,
preprint (2020),
arXiv: 1907.01207v3.


\bibitem{CGL2}
Chen, X.,
Gounelas, F.,
Liedtke, C.:
\emph{Rational curves on lattice-polarized K3 surfaces},
to appear in Algebraic Geometry,
preprint (2020), 
arXiv: 1907.01208v3.

%
%
%


\bibitem{degt}
Degtyarev, A.:
\emph{Lines on smooth polarized K3 surfaces},
Discrete Comput. Geom. {\bf 62} (2019), 601--648.

\bibitem{DIS}  Degtyarev, A., Itenberg, I.,  Sert\"oz, A. S.: \emph{Lines on quartic surfaces.} 
Math. Ann. {\bf 368}  (2017), 753--809.


 \bibitem{Deligne}
 Deligne, P.:
 \emph{Rel\`evement des surfaces K3 en caract\'eristique nulle}.
 Algebraic surfaces (Orsay, 1976--78),
 Lect.~Notes in Math.~{\bf 868} (1981), 58--79.




%
%
%
%
%
%




%

%

\bibitem{Huybrechts} Huybrechts, D.: 
\emph{Lectures on K3 surfaces},
Cambridge University Press, 2016.


\bibitem{Ito1} 
Ito, H.: \emph{On unirationality of extremal elliptic surfaces}, 
Math. Ann. {\bf  310} (1998), 717--733.


\bibitem{Ito2} Ito, H.: \emph{On extremal elliptic surfaces in characteristic 2 and 3}, Hiroshima Math. J. {\bf 32} (2002), 179--188.


%
\bibitem{K} Kodaira, K.:
\emph{On compact analytic surfaces I-III},
Ann.~of Math., {\bf 71} (1960), 111--152;
{\bf 77} (1963), 563--626; {\bf 78} (1963), 1--40.
%
%
%

%


\bibitem{LL}
Li, J.,
Liedtke, C.:
\emph{Rational curves on K3 surfaces},
Invent. Math. {\bf 188} (2012), 713--727.

\bibitem{MM}
Mori, S.,
Mukai, S.:
\emph{The Uniruledness of the Moduli Space of Curves of Genus 11},
Lecture
Notes in Math. {\bf 1016}, 334--353. Springer, Berlin (1983).

%

 \bibitem{MP}
 Miranda, R.,
 Persson, U.:
 \emph{Configurations of $\IK_n$ Fibers on Elliptic K3 Surfaces},
Math.~ Z.~{\bf 201} (1989), 339--361.




\bibitem{Miyaoka-orbi}
Miyaoka, Y.:
\emph{The orbibundle Miyaoka--Yau--Sakai inequality and an effective Bogomolov--McQuillan theorem},
Publ. RIMS. {\bf 44} (2008), 403--417. 

\bibitem{Miyaoka} Miyaoka, Y.:  
\emph{Counting Lines and Conics on a Surface}, Publ. RIMS, 
{\bf 45} (2009), 919--923.



\bibitem{nikulin} Nikulin, V. V.: 
	\emph{Integral symmetric bilinear forms and some of their applications},
	Math. USSR Izv.~\textbf{14} (1980), 103--167.

%
%
%
\bibitem{RS} Rams, S., Sch\"utt, M.: \emph{64 lines on smooth quartic surfaces.}  Math. Ann. {\bf 362} (2015), 679--698. 

\bibitem{RS-112}  Rams, S., Sch\"utt, M.:
\emph{112 lines on smooth quartic surfaces (characteristic 3).} Quart. J. Math. {\bf 66} (2015), 941--951. 



\bibitem{RS-12}
Rams, S., Sch\"utt, M.:
\emph{12 rational curves on Enriques surfaces},
preprint (2020),
arXiv: 2005.12939.

%


\bibitem{RuS}
Rudakov, A.~N., Shafarevich, I.~R.:
\emph{Supersingular K3 surfaces  over fields of characteristic $2$}, 
Izv.~Akad.~Nauk SSSR, Ser.~Mat.~{\bf 42} (1978), 848--869.




\bibitem{S-D}
Saint-Donat, B.: 
\emph{Projective models of K3 surfaces}. 
Am. J. Math. {\bf 96} (1974), 602--639.



%
%

\bibitem{SSc} Sch\"utt, M., Schweizer, A.: \emph{On the uniqueness of elliptic K3 surfaces with maximal singular fibre}, 
Annales Institut Fourier {\bf 63} (2013), 689--713.

%
%
\bibitem{Segre}
Segre, B.:
\emph{The maximum number of lines lying on a quartic surface}.
Quart. J. Math., Oxford
Ser. {\bf 14} (1943), 86--96.

%



 \bibitem{Shimada} 
 Shimada, I.:  
\emph{Rational double points on supersingular $K3$ surface}, 
Math. Comp. {\bf 73} (2004), 1989--2017.



\bibitem{Shimada-Zhang}
Shimada, I., Zhang, D.-Q.:
 \emph{Classification of extremal elliptic $K3$ surfaces
 and fundamental groups of open $K3$ surfaces},
 Nagoya Math.~J.~{\bf 161} (2001), 23--54.
 
 \bibitem{MWL} Shioda, T.: 
\emph{On the Mordell--Weil lattices},  Comment. Math. Univ. St.
Pauli {\bf 39} (1990), 211--240.  
 
%
%


\bibitem{Tate-genus}
Tate, J.:
\emph{Genus change in purely inseparable extensions of function fields},
Proc. AMS {\bf  3} (1952), 400--406.

%
\bibitem{Tate} Tate, J.: {\it Algorithm for determining the type
of a singular fibre in an elliptic pencil}, in: {\it Modular
functions of one variable IV} (Antwerpen 1972), Lect.~Notes in Math.~{\bf 476}
(1975), 33--52.

%

%
%













\end{thebibliography}
\end{document}